\theoremstyle{plain}
\newtheorem{thm}{Theorem}[section] % provides command \autoref{}, which produces citations like ``Theorem 1.1''.
\newaliascnt{lemma}{thm}\newtheorem{lemma}[lemma]{Lemma}\aliascntresetthe{lemma}
\newaliascnt{cor}{thm}\aliascntresetthe{cor}
\newaliascnt{prop}{thm}\newtheorem{prop}[prop]{Proposition}\aliascntresetthe{prop}
\newtheorem*{thm*}{Theorem}
\newtheorem*{lem*}{Lemma}
\newtheorem*{cor*}{Corollary}
\theoremstyle{definition}
\newaliascnt{definition}{thm}\newtheorem{definition}[definition]{Definition}\aliascntresetthe{definition}
\newaliascnt{rem}{thm}\newtheorem{rem}[rem]{Remark}\aliascntresetthe{rem}
\newaliascnt{ex}{thm}\newtheorem{ex}[ex]{Example}\aliascntresetthe{ex}
\newtheorem{question}[thm]{Question}
\newtheorem*{definition*}{Definition}
\newtheorem*{ex*}{Example}
\newtheorem*{rem*}{Remark}
\theoremstyle{remark}
\DeclareMathOperator{\FE}{FE}
\DeclareMathOperator{\FP}{FP}
\DeclareMathOperator{\Mor}{Mor}
\DeclareMathOperator{\im}{im}
\DeclareMathOperator{\coker}{coker}
\DeclareMathOperator{\Hom}{Hom}
\DeclareMathOperator{\Ext}{Ext}
\DeclareMathOperator{\rk}{rank}
\DeclareMathOperator{\rref}{rref}
\DeclareMathOperator{\sgn}{sgn}
\DeclareMathOperator{\Cr}{Cr}
\DeclareMathOperator{\hypersum}{\,\raisebox{-2.2pt}{\larger[2]{$\boxplus$}}\,}
\DeclareMathOperator{\diag}{diag}
\newcommand\D{{\mathbb D}}
\newcommand\F{{\mathbb F}}
\renewcommand\H{{\mathbb H}}
\newcommand\K{{\mathbb K}}
\newcommand\Q{{\mathbb Q}}
\newcommand\R{{\mathbb R}}
\renewcommand\S{{\mathbb S}}
\newcommand\T{{\mathbb T}}
\newcommand\U{{\mathbb U}}
\newcommand\Z{{\mathbb Z}}
\newcommand{\bfz}{\textbf{0}}
\renewcommand{\emptyset}{\varnothing}
\title{Representing matroids via pasture morphisms}
\author{Justin Chen, Tianyi Zhang}
\address{\rm School of Mathematics, Georgia Institute of Technology, Atlanta, Georgia, USA}
\email{\{jchen646,tzhang358\}@gatech.edu}
\subjclass[2020]{05-08, 05-04, 05B35, 12K99}
\keywords{matroid representations, pastures, foundations, Tutte group, partial field}
\begin{document}

\vspace*{-0.5cm}
\maketitle

\begin{abstract}
Using the framework of pastures and foundations of matroids developed by Baker-Lorscheid, we give algorithms to: (i) compute the foundation of a matroid, and (ii) compute all morphisms between two pastures.
Together, these provide an efficient method of solving many questions of interest in matroid representations, including orientability, non-representability, and computing all representations of a matroid over a finite field.
\end{abstract}

\section{Introduction}
A matroid is a combinatorial structure that abstracts the notion of linear independence in linear algebra.
Given a matrix $A$ (over a field), one obtains a matroid $M[A]$ with ground set equal to the columns of $A$, and bases given by bases of the column space. 
A natural question is: given a matroid $M$, can one find a matrix $A$ such that $M[A] = M$, i.e. is $M$ \emph{representable} by a matrix $A$?
One approach (cf. \cite{lunelli2002representation}) is to take the entries of $A$ to be indeterminates and use the nonbases/bases to set up a system of polynomial equations/inequations.
However, such systems quickly exceed the practical capabilities of general-purpose polynomial solvers (e.g. using Gr\"obner bases), even for matroids of medium size (i.e. $\ge 6$ elements).

In another direction, there exist interesting matroids -- like the Vamos and non-Pappus matroids -- that are not representable over any field, and for such matroids one would still like analogous ``representations".
Baker and Lorscheid \cite{Baker-Lorscheid20} introduced the category of
 \emph{pastures}, which contains the category of fields, and is better behaved categorically (e.g. has all small limits and colimits, see \cite{creech2021limits}).
 They also introduced \emph{representations of matroids over pastures}, which are analogous to matrices in the case of fields.
For instance, both the Vamos and non-Pappus matroids are \emph{orientable}, i.e. are representable over the sign hyperfield.

In \cite{Baker-Lorscheid20}, Baker and Lorscheid also introduced the \emph{foundation} of a matroid $M$, which is a pasture $F_M$ that captures all data of $P$-representations of $M$ for any pasture $P$.
More precisely, for any pasture $P$, there is a bijection between rescaling equivalence classes of $P$-representations of $M$ and pasture morphisms $F_M \to P$, see \Cref{thm:UP}.
This motivates the following computational problems for a matroid $M$:
\begin{enumerate}
    \item Compute the foundation $F_M$ of $M$,
    \item Given a pasture $P$, compute all pasture morphisms $F_M \to P$,
    \item Given a pasture $P$ and a pasture morphism $\varphi : F_M \to P$, compute a $P$-representation of $M$ corresponding to $\varphi$.
    In particular, when $P$ is a field, compute a matrix corresponding to $\varphi$.
\end{enumerate}
Our goal is to develop algorithms to solve these 3 problems.
We recall some background on the category of pastures in \Cref{sec:pastures}.
Next, we discuss foundations in \Cref{sec:foundationAlgorithm}, and give an algorithm to compute the foundation of a matroid.
In \Cref{sec:morphismAlgorithm}, we explain in detail our algorithm to compute pasture morphisms.
Finally, we discuss some applications in \Cref{sec:matrixReps}, including how to obtain a matrix representation from a given pasture morphism.

\section{Pastures} \label{sec:pastures}

\subsection{The category of pastures} \label{ssec:pastureCategory}
We begin with the category of pastures (cf. \cite[p. 4]{Baker-Lorscheid20}):

\begin{definition}
A \emph{pasture} is a monoid $(P, \cdot)$ with identity $1$ and absorbing element $0$, such that $P^\times := (P \setminus \{0\}, \cdot)$ is an abelian group, together with a \emph{nullset} $N_P \subseteq P^3$ satisfying:
\begin{enumerate}
    \item [(P1)] $(1,0,0) \not \in N_P$
    \item [(P2)] There is a unique element $\epsilon \in P$ such that $(1,\epsilon,0) \in N_P$
    \item [(P3)] $N_P$ is closed under the action of $S_3 \times P$: $(\sigma, p) \cdot (a_1, a_2, a_3) := (a_{\sigma(1)} p, a_{\sigma(2)} p, a_{\sigma(3)} p)$.
\end{enumerate}
\end{definition}

We denote by ``$a + b + c = 0$'' the $S_3$-orbit of a triple $(a,b,c) \in N_P$, and view this as a 3-term \emph{additive relation} in $P$.
If $c = 0$, we write ``$a + b = 0$'' as a 2-term additive relation.

\begin{definition} \label{def:pastureMorphism}
Let $P_1, P_2$ be pastures.
A {\em pasture morphism} is a (set) map $f : P_1 \to P_2$ such that $f(0) = 0$, $f$ restricts to a group homomorphism $P_1^\times \to P_2^\times$, and $f^3 : P_1^3 \to P_2^3$ restricts to a map $N_{P_1} \to N_{P_2}$.
A pasture morphism $f: P_1 \to P_2$ is an \emph{isomorphism} if there exists a pasture morphism $g: P_2 \to P_1$ such that $g \circ f = \text{id}_{P_1}$ and $f \circ g = \text{id}_{P_2}$.
\end{definition}

In general, a pasture may have (infinitely) many additive relations.
However, by axiom (P3), we may always rescale one of the nonzero terms to be $\epsilon$.
Then by axiom (P2) (along with $\epsilon^2 = 1$), all 2-term relations are $P^\times$-multiples of $1 + \epsilon = 0$.
The 3-term relations require more consideration -- assuming $c \ne 0$, there are 3 ways to rescale a 3-term relation to make one term equal to $\epsilon$:
\[
    a+b+c = 0 \iff x + y + \epsilon = 0, \;\; \epsilon + \frac{1}{x} + \epsilon \cdot \frac{y}{x} = 0, \;\; \epsilon \cdot \frac{x}{y} + \epsilon + \frac{1}{y} = 0
\]
where $x = \epsilon \cdot \frac{a}{c}$ and $y = \epsilon \cdot \frac{b}{c}$.
This motivates the following definitions (cf. \cite[Section 3.2]{Baker-Lorscheid18}):

\begin{definition} \label{def:fundDefs}
Let $P$ be a pasture.
An element $x \in P^\times$ is called a \emph{fundamental element} if there exists $y \in P^\times$ such that $x + y + \epsilon = 0$, i.e. ``$x + y = 1$''.
In this case we say that $y$ is a \emph{partner} of $x$, and call the ordered pair $(x, y)$ a \emph{fundamental pair}. % (which need not be unique, nor distinct from $x$)
We denote by $\FE(P)$ resp. $\FP(P)$ the set of fundamental elements resp. pairs of $P$.

There is a natural equivalence relation on $\FP(P)$: two fundamental pairs are equivalent if they give rise to the same 3-term relation (up to rescaling by $P^\times$).
We define a \emph{hexagon} to be an equivalence class, and represent it by a triple $((x, y), (\frac{1}{x}, \epsilon \cdot \frac{y}{x}), (\frac{1}{y}, \epsilon \cdot \frac{x}{y}))$ for some $(x, y) \in \FP(P)$.
Thus for any fundamental pair $(x, y)$, exactly one of $(x, y)$ and $(y, x)$ appears in a unique hexagon.
% if $(x, y)$ is a fundamental pair, then so are $(\frac{1}{x},\ \epsilon \cdot \frac{y}{x})$ and $(\frac{1}{y},\ \epsilon \cdot \frac{x}{y})$.
% Given a fundamental pair $(x, y)$, we define the triple $((x, y), (\frac{1}{x}, \epsilon \cdot \frac{y}{x}), (\frac{1}{y}, \epsilon \cdot \frac{x}{y}))$ as the \emph{hexagon generated by $(x, y)$}.
% Given a fundamental pair $(x, y)$, there are two other fundamental pairs $(\frac{1}{x},\ \epsilon \cdot \frac{y}{x})$ and $(\frac{1}{y},\ \epsilon \cdot \frac{x}{y})$ that correspond to the same 3-term relation.
% We call $((x, y), (\frac{1}{x}, \epsilon \cdot \frac{y}{x}), (\frac{1}{y}, \epsilon \cdot \frac{x}{y}))$ the \emph{hexagon generated by the pair $(x, y)$}.
% It is straightforward to check that any of these 3 pairs (and their reverses) generate the same hexagon, corresponding to a 3-term relation (up to $S_3$-symmetry).
% We denote by $\FE(P)$ the set of fundamental elements in $P$, $\FP(P)$ the set of fundamental pairs of $P$, and $\Hex(P)$ the set of hexagons of $P$. % (so that $|\FP(P)| = 3 \cdot |\Hex(P)|$).
\end{definition}

% Fundamental pairs, or equivalently hexagons, characterize the 3-term relations of a pasture.
We next introduce some terminology for conditions on pastures:
% which will be useful:

\begin{definition} \label{def:pastureConditions}
Let $P$ be a pasture.
We say that:
\begin{itemize}
\item $P$ is \emph{slim} if every fundamental element has a unique partner
\item $P$ is \emph{finitely generated} if $P^\times$ is finitely generated as an abelian group
\item $P$ is \emph{finitely related} if $|\FP(P)| < \infty$
\item $P$ is \emph{finitely presented} if $P$ is both finitely generated and finitely related.
\end{itemize}
\end{definition}

Many pastures of interest (including all the ones considered in this paper) are finitely presented, e.g. foundations of finite matroids, finite fields, and many other interesting partial fields and hyperfields.

We can now reformulate \Cref{def:pastureMorphism} in terms of fundamental pairs (note that 2-term relations are automatically handled by the condition $f (\epsilon_1) = \epsilon_2$):

\begin{lemma} \label{lem:Hom}
Let $P_1, P_2$ be pastures.
A set map $f : P_1 \to P_2$ is a morphism of pastures if and only if $f(0) = 0$, $f(\epsilon_1) = \epsilon_2$, $f$ induces a group homomorphism from $P_1^\times$ to $P_2^\times$, and $(f(x), f(y)) \in \FP(P_2)$ for any $(x, y) \in \FP(P_1)$.
\end{lemma}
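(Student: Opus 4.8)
The plan is to match the single condition from \Cref{def:pastureMorphism} that $f^3$ restrict to a map $N_{P_1}\to N_{P_2}$ against the two conditions $f(\epsilon_1)=\epsilon_2$ and $(f(x),f(y))\in\FP(P_2)$ for every $(x,y)\in\FP(P_1)$; the remaining clauses ($f(0)=0$ and $f$ a group homomorphism on units) are literally common to both formulations. The bridge is the normal form analysis of additive relations recalled just before \Cref{def:fundDefs}: modulo the $S_3\times P^\times$-action of axiom (P3), every triple in a pasture's nullset is equivalent to $(0,0,0)$, to $(1,\epsilon,0)$, or to $(x,y,\epsilon)$ for some fundamental pair $(x,y)$.

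For the forward implication I would evaluate a given morphism $f$ on these normal forms. Applying $f^3$ to $(1,\epsilon_1,0)\in N_{P_1}$ gives $(1,f(\epsilon_1),0)\in N_{P_2}$, whence $f(\epsilon_1)=\epsilon_2$ by the uniqueness clause of axiom (P2) for $P_2$; applying $f^3$ to $(x,y,\epsilon_1)\in N_{P_1}$, for $(x,y)\in\FP(P_1)$, gives $(f(x),f(y),\epsilon_2)\in N_{P_2}$, i.e. $(f(x),f(y))\in\FP(P_2)$.

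For the converse I would first note that, since $0$ is absorbing and $f(0)=0$, the hypothesis that $f$ restricts to a group homomorphism $P_1^\times\to P_2^\times$ already makes $f$ a homomorphism of monoids $P_1\to P_2$; consequently $f^3$ intertwines the $S_3\times P_1^\times$-action on $P_1^3$ with the $S_3\times P_2^\times$-action on $P_2^3$ (letting $P_1^\times$ act on the target through $f$), so it suffices to check that $f^3$ carries one representative of each orbit of $N_{P_1}$ into $N_{P_2}$. By axioms (P1) and (P2) together with the rescaling identities preceding \Cref{def:fundDefs}, these representatives are precisely the normal forms above: $(0,0,0)$ maps to $(0,0,0)$, which is unproblematic; $(1,\epsilon_1,0)$ maps to $(1,\epsilon_2,0)\in N_{P_2}$ using the hypothesis $f(\epsilon_1)=\epsilon_2$ and axiom (P2); and $(x,y,\epsilon_1)$ maps to $(f(x),f(y),\epsilon_2)\in N_{P_2}$ using the hypothesis $(f(x),f(y))\in\FP(P_2)$.

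I expect the only genuine work to lie in the converse, namely in making the orbit classification airtight: ruling out, via (P1), any relation with exactly one nonzero term, and recognizing, via the uniqueness in (P2), that every 2-term relation is a $P^\times$-multiple of $1+\epsilon_1=0$. This 2-term case is also why $f(\epsilon_1)=\epsilon_2$ must be assumed separately rather than derived: it is not forced by the $\FP$-condition (for instance $\FP(\F_2)=\emptyset$, yet the relation $1+1=0$ of $\F_2$ still has to be preserved), and it is exactly the datum governing the 2-term relations that are invisible to fundamental pairs.
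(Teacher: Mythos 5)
Your proof is correct. The paper itself states \Cref{lem:Hom} without a formal proof, offering only the parenthetical remark that ``2-term relations are automatically handled by the condition $f(\epsilon_1)=\epsilon_2$''; your argument fleshes out exactly this intended reasoning by reducing, via (P1)--(P3) and the rescaling identities, every nullset triple to a normal form $(0,0,0)$, $(1,\epsilon,0)$, or $(x,y,\epsilon)$ with $(x,y)\in\FP$, and checking $f^3$ on each. Your closing observation that $f(\epsilon_1)=\epsilon_2$ is genuinely independent of the $\FP$-condition (e.g.\ $\FP(\F_2)=\emptyset$) is a worthwhile clarification the paper leaves implicit.
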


\begin{lemma} \label{lem:iso}
Let $P_1, P_2$ be pastures with $P_1$ finitely related.
The following are equivalent for a pasture morphism $f : P_1 \to P_2$:
\begin{enumerate}
    \item $f$ is an isomorphism
    \item $f$ induces a group isomorphism $P_1^\times \xrightarrow{\sim} P_2^\times$ and a bijection $N_{P_1} \xrightarrow{\sim} N_{P_{2}}$
%     \item $f$ induces a group isomorphism $P_1^\times \xrightarrow{\sim} P_2^\times$ and $|\Hex(P_1)| = |\Hex(P_2)|$
    \item $f$ induces a group isomorphism $P_1^\times \xrightarrow{\sim} P_2^\times$ and $|\FP(P_1)| = |\FP(P_2)|$.
\end{enumerate}
\end{lemma}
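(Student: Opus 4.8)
The plan is to prove the cyclic chain $(1)\Rightarrow(2)\Rightarrow(3)\Rightarrow(1)$, with the finite-relatedness of $P_1$ entering only in the last implication. For $(1)\Rightarrow(2)$, let $g\colon P_2\to P_1$ be a pasture morphism inverting $f$. Then $f|_{P_1^\times}$ and $g|_{P_2^\times}$ are mutually inverse group homomorphisms, so $f$ induces a group isomorphism $P_1^\times\xrightarrow{\sim}P_2^\times$; likewise $f^3$ and $g^3$ restrict to mutually inverse maps $N_{P_1}\to N_{P_2}$ and $N_{P_2}\to N_{P_1}$ (since $(g\circ f)^3$ and $(f\circ g)^3$ are the respective identities), so $f^3$ restricts to a bijection $N_{P_1}\xrightarrow{\sim}N_{P_2}$.

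For $(2)\Rightarrow(3)$, the key observation is that $(x,y)\in\FP(P)$ exactly when $x,y\in P^\times$ and $(x,y,\epsilon_P)\in N_P$ (using (P3) to replace the $S_3$-orbit by the single triple). Since $f$ is a morphism with $f(\epsilon_1)=\epsilon_2$, the assignment $(x,y)\mapsto(f(x),f(y))$ sends $\FP(P_1)$ into $\FP(P_2)$ and is injective because $f|_{P_1^\times}$ is. Using the bijection $N_{P_1}\xrightarrow{\sim}N_{P_2}$ together with $f(0)=0\ne\epsilon_2$ and the injectivity of $f$, one checks that every $(x',y')\in\FP(P_2)$ is hit, so this assignment is a bijection $\FP(P_1)\xrightarrow{\sim}\FP(P_2)$; in particular $|\FP(P_1)|=|\FP(P_2)|$.

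For $(3)\Rightarrow(1)$, define $g\colon P_2\to P_1$ by $g(0)=0$ and $g|_{P_2^\times}=(f|_{P_1^\times})^{-1}$ --- legitimate since $f$ induces a group isomorphism --- so that $g$ is a two-sided set-theoretic inverse of $f$ with $g(\epsilon_2)=\epsilon_1$. By \Cref{lem:Hom}, it remains only to verify that $(g(x'),g(y'))\in\FP(P_1)$ for every $(x',y')\in\FP(P_2)$. Here finiteness is essential: $f$ induces an injection $\FP(P_1)\hookrightarrow\FP(P_2)$, $(x,y)\mapsto(f(x),f(y))$, and since $P_1$ is finitely related both sets are finite of the same cardinality by hypothesis, so this injection is a bijection; hence every $(x',y')\in\FP(P_2)$ equals $(f(x),f(y))$ for some $(x,y)\in\FP(P_1)$, i.e.\ $(g(x'),g(y'))=(x,y)\in\FP(P_1)$. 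Thus $g$ is a pasture morphism inverse to $f$, so $f$ is an isomorphism.

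The step I expect to be the main obstacle is $(3)\Rightarrow(1)$, together with correctly handling what ``isomorphism'' means: a bijective pasture morphism need not be an isomorphism in general, and the content of the argument is precisely that finiteness of $\FP(P_1)$ plus the equality of cardinalities upgrades the a priori merely injective map on fundamental pairs to a bijection --- which is exactly what forces the set-theoretic inverse of $f$ to respect the additive relations via \Cref{lem:Hom}. The remaining implications are routine bookkeeping with the group isomorphism and the nullset bijection.
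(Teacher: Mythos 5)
Your proof is correct and takes essentially the same approach as the paper: the implications $(1)\Rightarrow(2)\Rightarrow(3)$ are routine (the paper just calls them ``clear''), and for the key implication $(3)\Rightarrow(1)$ you argue, exactly as the paper does, that \Cref{lem:Hom} plus injectivity of $f|_{P_1^\times}$ gives an injection $\FP(P_1)\hookrightarrow\FP(P_2)$ which the cardinality hypothesis and finiteness upgrade to a bijection, so the set-theoretic inverse $g$ is again a pasture morphism by \Cref{lem:Hom}. The additional remarks you make (spelling out the surjectivity on fundamental pairs in $(2)\Rightarrow(3)$, and noting that a bijective pasture morphism need not be an isomorphism) are accurate and useful context but do not change the substance of the argument.
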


\begin{proof}
$(1) \implies (2) \implies (3)$ is clear.
For $(3) \implies (1)$: note that by \Cref{lem:Hom}, $f$ induces an injection $\FP(P_1) \hookrightarrow \FP(P_2)$, which is a bijection since $|\FP(P_2)| = |\FP(P_1)|$ is finite.
Let $g^\times : P_2^\times \to P_1^\times$ be the inverse of the induced isomorphism $P_1^\times \xrightarrow{\sim} P_2^\times$, and extend $g^\times$ to $g : P_2 \to P_1$ by setting $g(0) := 0$.
Applying \Cref{lem:Hom} again shows that $g$ is a pasture morphism, which is a two-sided inverse of $f$ by construction.
\end{proof}

\begin{ex} \label{ex:universalObjects}
The category of pastures admits an initial object and a terminal object.
The initial object is the \emph{regular partial field} $\F_1^\pm$, where $\F_1^\pm = \{0,1,-1\}$ as a set, with usual multiplication, $\epsilon = -1$, and no fundamental elements.
The terminal object is the \emph{Krasner hyperfield} $\K$, where $\K = \{0,1\}$ as a set, with usual multiplication, $\epsilon = 1$, and one hexagon $( ( 1,1 ), ( 1,1 ), ( 1,1 ) )$.
\end{ex}

More generally, pastures generalize both partial fields and hyperfields:

\begin{ex} \label{ex:partialHyperfieldsAsPastures}
A partial field $(R, G)$ can be viewed as a pasture $P_G$, by taking $P_G^\times := G$, $\epsilon := -1 \in R$ and declaring that $\alpha + \beta + \gamma = 0$ in $P_G \iff \alpha+\beta+\gamma = 0 \in R$ for all $\alpha, \beta, \gamma \in G$.
Note that every partial field is slim.

A hyperfield $H$ can be viewed as a pasture $P_H$, by taking $P_H^\times := H^\times$, $\epsilon := -1 \in H$ and declaring that $\alpha + \beta + \gamma = 0$ in $P_H \iff 0 \in \alpha \hypersum \beta \hypersum \gamma \subseteq H$ for all $\alpha, \beta, \gamma \in H^\times$.
For instance, the \emph{sign hyperfield} $\S = \R/\R_{>0}$ has multiplicative group $\Z/2\Z$, with one hexagon $( (1, 1), (1, \epsilon ), (1, \epsilon ) )$ (which is not slim).

In particular, fields are both partial fields and hyperfields, and induce the same pasture in both ways.
\end{ex}

\subsection{Representations of matroids over pastures} \label{ssec:pastureReps}

The most interesting pastures for our purposes arise in connection with matroids.
We assume basic familiarity with matroids, and refer to \cite{Oxley92} for matroid-theoretic terminology.
Let $[n] := \{ 0, \ldots, n-1 \}$ denote a totally ordered $n$-element set (serving as the ground set of a matroid), and let $\binom{[n]}{k}$ denote the set of all subsets of $[n]$ of size $k$.
We write $\sgn : S_n \to \Z/2\Z = \{0,1\}$ for the sign map.

\begin{definition}[{\cite[p. 5]{Baker-Lorscheid20}}] \label{def:GPfunction}
Let $M$ be a matroid of rank $r$ on $[n]$, and let $P$ be a pasture. 
A {\em $P$-representation of $M$} is a function $\Delta: [n]^r \to P$ such that:
\begin{enumerate}
    \item [(GP1)] $\Delta(x_1, \ldots, x_r) \neq 0$ if and only if $\{ x_1, \ldots, x_r \}$ is a basis of $M$
    \item [(GP2)] $\Delta(\sigma(x_1), \ldots, \sigma(x_r)) = \epsilon^{\sgn(\sigma)} \Delta(x_1, \ldots, x_r)$ for any $\sigma \in S_r$
    \item [(GP3)] For any $J := \{ y_1, \ldots, y_{r-2} \} \in [n]^{r-2}$ and all $\{ e_1,e_2,e_3,e_4 \} \in [n]^4$,
    \[
    \Delta(Je_1e_2)\Delta(Je_3e_4) + \epsilon \Delta(Je_1e_3)\Delta(Je_2e_4) + \Delta(Je_1e_4)\Delta(Je_2e_3) = 0
    \]
    where $Je_ie_j := \{ y_1,\ldots, y_{r-2}, e_i, e_j \}$.
\end{enumerate}
\end{definition}

\begin{rem}
If $P = \K$ is the Krasner hyperfield, then (GP3) over $\K$ reduces to the usual basis exchange axiom, so every matroid has a canonical (in fact, unique) $\K$-representation.
\end{rem}

% \begin{ex}
% If $K$ is a field, a $K$-matroid is a $K$-rational point in the Grassmannian $G(r,n)$, i.e. a representation of a matroid over $K$.
% \end{ex}

Next, we recall 2 important types of equivalences among representations over pastures:

\begin{definition}[{\cite[p. 5]{Baker-Lorscheid20}}]
Let $\Delta, \Delta'$ be $P$-representations of a matroid $M$.
\begin{enumerate}
\item We say that $\Delta$ and $\Delta'$ are {\em isomorphic} if there exists $c \in P^\times$ such that $\Delta'(x_1,\ldots,x_r) = c \Delta(x_1,\ldots,x_r)$ for all $\{x_1,\ldots,x_r\} \in [n]^r$. We denote by $\mathcal X^I_M(P)$ the set of isomorphism classes of $P$-representations of $M$.
\item We say that $\Delta$ and $\Delta'$ are {\em rescaling equivalent} if there exists $c \in P^\times$ and a map $d: [n] \to P^\times$ such that $\Delta'(x_1,\ldots,x_r) = c\cdot d(x_1)\cdots d(x_r)\cdot \Delta(x_1,\ldots,x_r)$ for all $\{x_1,\ldots,x_r\} \in [n]^r$. We denote by $\mathcal X^R_M(P)$ the set of rescaling equivalence classes of $P$-representations of $M$.
\end{enumerate}
\end{definition}

\begin{rem}[{\cite[p. 6]{Baker-Lorscheid20}}] \label{rem:functoriality}
For a fixed matroid $M$, the associations $P \mapsto \mathcal X^R_M(P)$ and $P \mapsto \mathcal X^I_M(P)$ are functorial.
In particular, if $f : P_1 \to P_2$ is a pasture morphism, there are natural maps $\mathcal X^R_M(P_1) \to \mathcal X^R_M(P_2)$ and $\mathcal X^I_M(P_1) \to \mathcal X^I_M(P_2)$, called the \emph{pushforward} along $f$. 
\end{rem}

\begin{ex}\label{ex:rescalingEquivalenceClassAndIsomorphismClass}
Let $M$ be a matroid, and $P$ a (partial) field.
For any $P$-representation $\Delta$ of $M$, there exists an $r \times n$ matrix $A$ over $P$ such that for any $\{x_1,\ldots,x_r\} \in [n]^r$, the $r \times r$ minor of $A$ with columns $\{x_1, \ldots, x_r \}$ equals $\Delta(x_1,\ldots,x_r)$.
Now suppose $\Delta, \Delta', \Delta''$ are $P$-representations of $M$ with corresponding matrices $A, A', A''$, such that $\Delta$ is isomorphic to $\Delta'$ and $\Delta$ is rescaling equivalent to $\Delta''$.
We can compute the reduced row echelon forms $\rref(A),\rref(A')$ and $\rref(A'')$ with respect to a basis $B_0$ of $M$ (i.e. fixing the submatrix with columns in $B_0$ to be the identity).
Then $\rref(A) = \rref(A')$, and $\rref(A'')$ can be obtained from $\rref(A)$ by column rescaling.
\end{ex}

The following theorem is a key motivation for this paper:

\begin{thm} \label{thm:UP}
Let $M$ be a matroid and $P$ a pasture.
\begin{enumerate}
\item{\cite[Prop 6.23]{BAKER2021107883}} There exists a (unique) pasture $P_M$, called the \emph{universal pasture} of $M$, representing the functor $P \mapsto \mathcal X^I_M(P)$, i.e. there is a bijection:
\[
 \mathcal X^I_M(P) \xrightarrow{\sim} \Mor(P_M, P).
\]
\item{\cite[Corollary 7.28]{BAKER2021107883}} There exists a (unique) pasture $F_M$, called the \emph{foundation} of $M$, representing the functor $P \mapsto \mathcal X^R_M(P)$, i.e. there is a bijection:
\[
 \mathcal X^R_M(P) \xrightarrow{\sim} \Mor(F_M, P).
\]
\end{enumerate}
\end{thm}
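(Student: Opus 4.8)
The plan is to treat both statements as instances of representability of a functor, and to construct the representing pasture explicitly by generators and relations. I would begin with part (1), the universal pasture $P_M$. Fix a matroid $M$ of rank $r$ on $[n]$. The data of a $P$-representation $\Delta$ is a function $[n]^r \to P$ subject to (GP1)--(GP3); an isomorphism class of such is obtained by quotienting by the simultaneous action of $P^\times$. I would form the free object in the category of pastures generated by symbols $\Delta(x_1,\dots,x_r)$ for each ordered basis $\{x_1,\dots,x_r\}$ (declaring $\Delta = 0$ on ordered $r$-tuples that are not bases, and imposing the alternating relations (GP2) so that only one symbol per unordered basis survives up to a sign), then impose the Grassmann--Pl\"ucker relations (GP3) as additive relations in the pasture, and finally quotient the multiplicative group by the subgroup generated by the "global scaling" element to pass from representations to isomorphism classes. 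Concretely, $P_M^\times$ is the abelian group with these generators and relations, and $N_{P_M}$ is generated under the $S_3 \times P_M^\times$-action (axiom (P3)) by the images of the (GP3) triples together with $1+\epsilon=0$. One must check this satisfies (P1)--(P3): (P3) is automatic by construction, (P2) follows from imposing $1+\epsilon=0$ and the fact that no relation forces $1 = 0$ (here one uses that $M$ has at least one basis, so the representation over $\F_1^\pm$ or over any field with enough elements is nonzero), and (P1) is the statement that we have not collapsed to a point, again guaranteed because $M$ admits a nonzero Grassmann--Pl\"ucker function over, say, $\K$. The universal property is then essentially formal: a pasture morphism $P_M \to P$ is, by the universal property of generators-and-relations, exactly a choice of images of the $\Delta$-generators satisfying (GP1) (the nonvanishing is built into which generators exist), (GP2), (GP3), and compatible with the scaling quotient -- i.e. precisely an element of $\mathcal X^I_M(P)$. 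Naturality in $P$ is immediate.

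For part (2), the foundation $F_M$, the approach is the same but the quotient is larger: rescaling equivalence allows, in addition to the global scalar $c$, a scalar $d(x) \in P^\times$ for each ground-set element $x \in [n]$, acting on $\Delta(x_1,\dots,x_r)$ by $d(x_1)\cdots d(x_r)$. So I would define $F_M$ as the further quotient of $P_M$ by the multiplicative subgroup generated by all the "elementary rescaling" elements $\Delta(x B_0 \setminus \{x_0\}) \cdot \Delta(B_0)^{-1}$-type ratios -- equivalently, one picks a reference basis $B_0$, normalizes the corresponding generator to $1$, and normalizes one generator per fundamental "cross-ratio" to absorb the $d$-action, leaving exactly the cross-ratios as multiplicative generators of $F_M^\times$ and the (GP3) relations among them as the additive relations. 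The verification that $F_M$ is a pasture and that $\Mor(F_M, P) \cong \mathcal X^R_M(P)$ is parallel to part (1): a morphism $F_M \to P$ is a choice of images of the surviving generators satisfying the surviving relations, which unwinds to a rescaling-equivalence class of $P$-representations. Alternatively -- and this is probably the cleaner exposition -- I would deduce (2) from (1) by observing that rescaling equivalence is the equivalence relation generated by isomorphism together with the torus action of $(P^\times)^{[n]}$, so $F_M$ is the quotient of $P_M$ by the image of a canonical homomorphism from a "coordinate torus pasture", and the universal property descends.

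The main obstacle is not the formal universal-property bookkeeping but the well-definedness and non-degeneracy of the constructed pasture: one must be sure that imposing (GP3) does not inadvertently force $1 = 0$ or violate (P1)/(P2), and that the passage to generators-and-relations in the category of pastures behaves correctly -- in particular that the nullset generated by the (GP3) triples under the $S_3 \times P_M^\times$-action still admits a \emph{unique} $\epsilon$ with $1+\epsilon=0$. This is where one genuinely uses the hypothesis that $M$ \emph{is} a matroid (the basis exchange axiom guarantees a consistent $\K$-representation, hence a morphism to $\K$, which certifies $F_M \neq 0$ and pins down that the relations are not contradictory), rather than an arbitrary set of purported bases. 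A secondary technical point is checking that the category of pastures has the colimits (free objects, quotients by imposing relations) used in the construction; this is where I would invoke \cite{creech2021limits}. Finally, uniqueness of $P_M$ and $F_M$ is automatic from the Yoneda lemma once representability is established.
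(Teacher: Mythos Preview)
The paper does not actually prove this theorem: both parts are stated with explicit citations to \cite{BAKER2021107883} (Proposition~6.23 and Corollary~7.28) and no proof is given in the paper itself. So there is no ``paper's own proof'' to compare your proposal against; the authors take the existence of $P_M$ and $F_M$ as input from Baker--Lorscheid and then, in Section~\ref{sec:foundationAlgorithm}, give an \emph{algorithmic} presentation of $F_M$ via the (inner) Tutte group.

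That said, your sketch is in the right spirit and is broadly consistent both with how Baker--Lorscheid construct these objects and with the paper's computational realization. Your generators-and-relations construction of $P_M^\times$ is essentially the Tutte group of Definition~\ref{def:TutteGroup} (symbols $X_B$ for bases, an element $\epsilon$, normalize $X_{B_0}=1$, impose the degenerate Grassmann--Pl\"ucker relations as multiplicative relations), and your ``further quotient by the torus action'' for $F_M$ is exactly the passage to the inner Tutte group via the degree map (Definitions~\ref{def:degreeMapOfTutteGroup}--\ref{def:innerTutteGroup}). Your identification of the additive relations as coming from (GP3) also matches Algorithm~\ref{alg:additiveRelations}, where the hexagons arise from non-degenerate cross-ratios attached to modular quadruples of hyperplanes. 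Your use of the canonical $\K$-representation to certify non-triviality is the right idea for ruling out collapse to the zero pasture. The one place where your phrasing is slightly loose is the handling of the global scaling: rather than ``quotienting by the subgroup generated by a scaling element'', the standard move (and the one the paper makes) is simply to set $X_{B_0}=1$ for a fixed basis $B_0$, which kills exactly the diagonal $P^\times$-action and no more.
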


Thus the foundation captures all the representability data of a matroid, over all pastures.
For purposes of computation, it is a nontrivial result that the foundation admits an explicit presentation via generators and relations, which we now  discuss.

\section{Foundation Algorithm} \label{sec:foundationAlgorithm}

% To present the foundation as a pasture, it suffices to compute the multiplicative group, the unit epsilon, and the fundamental pairs.
% In \cite{BAKER2021107883}, 
In \cite{BAKER2021107883}, Baker and Lorscheid showed that the multiplicative group of the universal pasture (resp. foundation) is isomorphic to the Tutte group (resp. inner Tutte group) \cite[Theorem 6.27 and Corollary 7.13]{BAKER2021107883}.
We first recall the notions of the (inner) Tutte group(s), 
which were introduced by Dress and Wenzel \cite{Dress1989GeometricAF}.
Then we give explicit presentations for the Tutte and inner Tutte groups,
from which we obtain the unit epsilon and the fundamental pairs for the foundation.
We remark that although our presentation is new, justification for most of the content in this section can be found in existing literature.

\begin{definition} \label{def:TutteGroup}
Let $M$ be a matroid of rank $r$ on $[n]$ and $\mathcal{B}$ the set of bases of $M$.
Let $G$ be the free abelian group (written multiplicatively) on the symbols $\{ \epsilon, X_{B} : B\in \mathcal{B} \}$.
For any (independent) set $I \in \binom{[n]}{r-2}$ and $k_1, \ldots, k_4 \in [n]$, define the \emph{cross-ratio}
\[
\Cr(I, k_1, k_2, k_3, k_4) := \epsilon^{\sgn(\sigma_{13}\sigma_{24}\sigma_{14}\sigma_{23})}{X}_{I \cup \{k_1,k_3\}}{X}_{I \cup \{k_2,k_4\}}{X}_{I \cup \{k_1,k_4\}}^{-1}{X}_{I \cup \{k_2,k_3\}}^{-1}
\label{eq:crossratio}
\]
whenever such an expression makes sense (i.e. each subscript is in $\mathcal{B}$), where $\sigma_{ij} \in S_n$ is the permutation bringing $I \cup \{k_i,k_j\}$ into increasing order.
Fix a basis $B_0 \in \mathcal B$, and set
\[
R(M, B_0) := \langle \epsilon^2, X_{B_0}, \Cr(I, k_1, k_2, k_3, k_4) : I \in \textstyle{\binom{[n]}{r-2}}, k_i \in [n], I \cup \{ k_1, k_2 \} \not \in \mathcal{B} \rangle \subseteq G.
\]
Finally, define the \emph{Tutte group} $\T = \T_M := G/R(M, B_0)$.
\end{definition}

We note that this definition of the Tutte group is slightly different than what appears in e.g. \cite[Definition 6.26]{BAKER2021107883}.
The choices made here necessitate the introduction of sign terms in the cross-ratios, but in return reduce the number of generators to $|\mathcal B| + 1$ (as opposed to $n^r + 1$ or $\binom{n}{r} + 1$), and also realize the Tutte group as a quotient (as opposed to a subquotient) of a free group.
Up to isomorphism, $\T_M$ is independent of the choice of basis $B_0 \in \mathcal B$.

Concretely, we identify the ambient free abelian group $G$ with $\Z^{(|\mathcal B|)+1}$, which has a $\Z$-basis $\{ e_\epsilon, e_B : B \in \mathcal B \}$.
\Cref{alg:TutteGroup} constructs a presentation matrix $R_1$ of $\T$, i.e. a matrix over $\Z$ with $\T = \coker R_1$.
(Hereafter we use $\mid$ to denote concatenation of matrices.)

\begin{algorithm}
\caption{Relations for Tutte group}
\begin{algorithmic}[1]
\Require A matroid $M = ([n], \mathcal B)$ of rank $r$ with a fixed basis $B_0 \in \mathcal B$
\Ensure A matrix $R_1$ over $\Z$ with $|\mathcal B|+1$ rows whose columns generate $R(M, B_0)$
\Procedure{TutteGroup}{$M, B_0$}
    \State $R_1 \gets 2e_\epsilon \mid e_{B_0}$
    \State $\mathcal N \gets \{N \subseteq [n] : \rk(N) = r-1, |N| = r \}$
    \For{$N \in \mathcal{N}$}
    \Comment{$N$ is in increasing order}
    \State $C \gets \text{unique circuit in }N$
    \State $D \gets \text{unique cocircuit in }[n] \setminus N$
    \State $i\gets \text{first element of } C$
    \State $j\gets \text{first element of } D$
    \For{$m \in C \setminus \{ i \}$}
    \For{$n \in D \setminus \{ j \}$}
    \State $I \gets N \setminus \{ i,m \}$
    \Comment{$I$ inherits the order from $N$}
    \State $R_1 \gets R_1 \mid$ Cr$(I,i,m,j,n)$
    \EndFor
    \EndFor
    \EndFor
    \State \textbf{return} $R_1$
\EndProcedure
\end{algorithmic}
\label{alg:TutteGroup}
\end{algorithm}

Once we have the relations matrix $R_1$, we can compute its Smith normal form to obtain a set of minimal generators of $\T$, as well as a projection $\rho: G \twoheadrightarrow \T$.

We next recall the definition of the \emph{inner Tutte group}:

\begin{definition}[{\cite[7.3]{BAKER2021107883}}] \label{def:degreeMapOfTutteGroup}
With notation as in \Cref{def:TutteGroup}, there is a group homomorphism
\begin{align*}
    \widetilde{\deg} &: G \to \Z^n \\
    \epsilon \mapsto {\bf 0}, \qquad \prod X_B^{a_B} \mapsto & \sum a_B \bigg( \sum_{i\in B\backslash B_0} e_i -\sum_{i\in B_0\backslash B} e_i \bigg)
\end{align*}
It is straightforward to verify that $R(M, B_0)$ is contained in the kernel of this map.
Thus there is an induced map $\deg_{[n]} : \T_M \to \Z^n$, called the \emph{degree map} of the Tutte group.  
\end{definition}

\begin{definition}[{\cite[Definition 7.11]{BAKER2021107883}}] \label{def:innerTutteGroup}
The \emph{inner Tutte group} $\T_M^{(0)}$ is the kernel of $\deg_{[n]}$.
\end{definition}

For computational purposes, it is desirable to have a presentation of the inner Tutte group.
To achieve this, we find a section $s$ of the degree map, which splits the short exact sequence
\[
0 \longrightarrow \T_M^{(0)} \xrightarrow{\;\; i \;\;} \mathbb T_M \xrightarrow{\; \deg \;} \im(\deg) \longrightarrow 0
\]
We use the \emph{coordinating path algorithm} to construct the section $s$.
With $B_0 \in \mathcal B$ fixed as before, we construct a bipartite graph $\Gamma = \Gamma(M, B_0)$: the vertex set of $\Gamma$ is the bipartition $(B_0, [n] \setminus B_0)$ of $[n]$, and there is an edge between two vertices $a\in B_0$ and $b\notin B_0$ whenever $B_0 \setminus \{a\}\cup \{b\} \in \mathcal B$. 
Let $F = \{\{a_1, b_1\},\ldots, \{a_m, b_m\}\}$ be a spanning forest of $\Gamma$ (where $a_i \in B_0$, $b_i \not \in B_0$).
For $1 \le i \le m$, set $B_i := B_0\setminus \{a_i\} \cup \{b_i\}$ (which is a basis of $M$).
We exhibit the section $s$ by declaring $s(e_{b_i} - e_{a_i}) := X_{B_i}$ for all $1 \le i \le m$. 

Note that $\deg(X_{B_i}) = e_{b_i} - e_{a_i}$, which are $\Z$-linearly independent for $1 \le i \le m$, and since $\Z^n$ is free abelian, any subgroup is also free abelian.
Thus we will have that $s$ as above indeed defines a section of $\deg$ once we show that $\{e_{b_i} - e_{a_i}\}_{i=1}^{m}$ generates $\im(\deg)$:

\begin{lemma}
Let $M$ be a matroid, and $F = \{\{a_1, b_1\},\ldots, \{a_m, b_m\}\}$ a spanning forest of $\Gamma(M, B_0)$.
Then $\{e_{b_i} - e_{a_i}\}_{i=1}^{m}$ generates the image of $\deg_{[n]} : \T_M \to \Z^n$.
\end{lemma}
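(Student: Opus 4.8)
The plan is to show that the image of $\deg_{[n]}$ is generated by the elements $\deg(X_B) = \sum_{i \in B \setminus B_0} e_i - \sum_{i \in B_0 \setminus B} e_i$ as $B$ ranges over all bases, and then to reduce this generating set to the "near-$B_0$" bases indexed by edges of $\Gamma(M,B_0)$, and finally to a spanning forest $F$. The first reduction is immediate: by definition $\T_M$ is generated by $\epsilon$ and the $X_B$, and $\deg(\epsilon) = \mathbf 0$, so $\im(\deg_{[n]})$ is generated by $\{\deg(X_B) : B \in \mathcal B\}$. The second reduction is the substantive step. For a basis $B$, consider the symmetric difference $B \mathbin{\triangle} B_0$; I would induct on $|B \setminus B_0|$ (equivalently $|B_0 \setminus B|$). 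When this is $0$ we have $B = B_0$ and $\deg(X_{B_0}) = \mathbf 0$. When it is $\geq 1$, pick $b \in B \setminus B_0$; by the basis exchange property (applied to $B$ and $B_0$) there is $a \in B_0 \setminus B$ with $B' := B \setminus \{b\} \cup \{a\}$ a basis, and also $B_0 \setminus \{a\} \cup \{b\}$ is a basis, i.e. $\{a,b\}$ is an edge of $\Gamma$. A direct computation of degrees gives $\deg(X_B) = \deg(X_{B'}) + \deg(X_{B_0 \setminus \{a\} \cup \{b\}})$ — both added vectors are supported within $B \mathbin{\triangle} B_0$, the first term corresponds to a basis with strictly smaller symmetric difference from $B_0$, and the second is of the "single exchange" form $e_b - e_a$. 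By induction $\deg(X_B)$ lies in the span of the $e_b - e_a$ over edges $\{a,b\}$ of $\Gamma$.

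It remains to pass from all edges of $\Gamma$ to a spanning forest $F$. This is a standard graph-theoretic fact: in any graph, the "edge vectors" $e_b - e_a$ over a spanning forest $\Z$-span all edge vectors, because for an edge $\{a,b\}$ not in $F$, the endpoints $a,b$ lie in the same component of $F$ (as $F$ is spanning), so there is a path $a = v_0, v_1, \ldots, v_\ell = b$ in $F$, and $e_b - e_a = \sum_{j=1}^{\ell} (e_{v_j} - e_{v_{j-1}})$ is an integer combination of $\pm$(forest edge vectors). Since $\Gamma$ is bipartite with parts $B_0$ and $[n] \setminus B_0$, each forest edge is of the form $\{a_i, b_i\}$ with $a_i \in B_0$, $b_i \notin B_0$, matching the statement's conventions (the signs work out because along the path the vertices alternate between the two parts). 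Combining the two reductions, $\{e_{b_i} - e_{a_i}\}_{i=1}^m$ generates $\im(\deg_{[n]})$.

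The main obstacle is the inductive step identifying $\deg(X_B) = \deg(X_{B'}) + \deg(X_{B_0 \setminus \{a\}\cup\{b\}})$ and verifying that $|B' \mathbin{\triangle} B_0| < |B \mathbin{\triangle} B_0|$ while all intermediate sets are genuinely bases — this requires invoking the basis exchange axiom in the form that guarantees the exchanged element $a$ can be chosen from $B_0 \setminus B$ so that $B_0 \setminus \{a\} \cup \{b\}$ is simultaneously a basis. This is exactly the symmetric exchange property of matroids (each element of $B \setminus B_0$ can be matched to an element of $B_0 \setminus B$ realizing both exchanges); I would cite the relevant statement from \cite{Oxley92}. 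Everything else is bookkeeping with the explicit formula for $\widetilde{\deg}$.
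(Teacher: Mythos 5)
Your proof is correct and follows essentially the same approach as the paper: both use the degree identity $\deg(X_B) = \deg(X_{B'}) + \deg(X_{B''})$ coming from symmetric basis exchange to induct on $|B \setminus B_0|$, and both use the cycle-in-forest argument to reduce from all edges of $\Gamma$ to those of $F$. The only difference is organizational: you cleanly separate the reduction into two stages (all bases $\to$ all $\Gamma$-edge vectors $\to$ forest-edge vectors), whereas the paper folds the cycle argument into the base case $|B \setminus B_0| = 1$ of a single induction that lands directly in the span of the forest edges; both are valid and the underlying ideas coincide.
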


\begin{proof}
The Tutte group $\T_M$ is generated by $\epsilon$ and $X_B$, where $B$ runs over all bases of $M$.
It suffices to show that $\deg(X_B) \in \langle e_{b_i} - e_{a_i} : 1 \le i \le m \rangle \subseteq \Z^n$ for any $B \in \mathcal B$, which we do by induction on $|B\setminus B_0|$.

For the base case $|B\setminus B_0| = 1$: there exists an edge $e = \{a,b\}$ of $\Gamma$ with $a\in B_0$, $b\in B$ such that $B = B_0\setminus\{a\}\cup \{b\}$.
Since $F$ is a spanning forest of $\Gamma$, either $e\in F$, or there is a cycle in $e\cup F$.
If $e\in F$, then there is nothing to show.
Otherwise, let $C$ be a cycle in $e\cup F$, and let $\{i_1,...,i_s\}$ be the indices of edges of $C$ in order, as in the following picture:

\begin{center}
\definecolor{xdxdff}{rgb}{0.49019607843137253,0.49019607843137253,1}
\definecolor{ududff}{rgb}{0.30196078431372547,0.30196078431372547,1}
\begin{tikzpicture}[line cap=round,line join=round,>=triangle 45,x=1cm,y=1cm]
\clip(-4,-0.5) rectangle (4,3.7);
\draw [line width=2pt] (-1,3)-- (1,3);
\draw [line width=2pt] (-1,3)-- (-2,1.5);
\draw [line width=2pt] (-2,1.5)-- (-1,0);
\draw [line width=2pt] (1,3)-- (2,1.5);
\draw [line width=2pt] (2,1.5)-- (1,0);
\begin{scriptsize}
\draw [fill=ududff] (-1,3) circle (2.5pt);
\draw[color=ududff] (-1.05,3.3) node {$a = a_{i_s}$};
\draw [fill=ududff] (1,3) circle (2.5pt);
\draw[color=ududff] (1.16,3.3) node {$b = b_{i_1}$};
\draw[color=black] (0.06,3.2) node {$e$};
\draw [fill=ududff] (-2,1.5) circle (2.5pt);
\draw[color=ududff] (-2.8,1.7) node {$b_{i_s} = b_{i_{s-1}}$};
\draw[color=black] (-1.8,2.31) node {$e_{i_s}$};
\draw [fill=xdxdff] (-1,0) circle (2.5pt);
\draw[color=xdxdff] (-0.7,0.2) node {$a_{i_{s-1}}$};
\draw[color=black] (-1.9,0.7) node {$e_{i_{s-1}}$};
\draw [fill=ududff] (2,1.5) circle (2.5pt);
\draw[color=ududff] (2.6,1.7) node {$a_{i_1} = a_{i_2}$};
\draw[color=black] (1.8,2.31) node {$e_{i_1}$};
\draw [fill=ududff] (1,0) circle (2.5pt);
\draw[color=ududff] (1.45,0.2) node {$b_{i_2}$};
\draw[color=black] (1.9,0.65) node {$e_{i_2}$};
\draw [fill=black] (-0.3,0) circle (1.5pt);
\draw [fill=black] (0,0) circle (1.5pt);
\draw [fill=black] (0.3,0) circle (1.5pt);
\draw[color=black] (0,1.5) node {$C$};
\end{scriptsize}
\end{tikzpicture}
\end{center}
Then note that $(e_b - e_a)+\sum_{j=1}^s (-1)^j(e_{b_{i_j}}-e_{a_{i_j}})=0$.

For the inductive step, let $B$ be a basis with $|B\setminus B_0| = k \ge 2$, and assume the claim holds for all bases that differ from $B_0$ by at most $k-1$ elements. 
By symmetric basis exchange \cite{brualdi1969comments}, there exist $a\in B_0\setminus B$ and $b\in B\setminus B_0$ such that $B' := B\setminus \{b\} \cup \{a\}$ and $B'' := B_0\setminus \{a\} \cup \{b\}$ are both bases. 
Then $\deg(X_B)-\deg(X_{B'}) = e_b-e_a = \deg(X_{B''})$,
$|B' \setminus B_0| = k-1$, and $|B'' \setminus B_0| = 1$, 
% differ from $B_0$ by at most $k-1$ elements, 
so by induction, the claim holds for $B$.
\end{proof}

With the section $s$, we can write the inner Tutte group as a quotient of the Tutte group, i.e. we have a projection $\T \twoheadrightarrow \T/\im(s) \cong \T^{(0)}$.
\Cref{alg:innerTutteGroup} constructs a lift of $\im(s)$ in $G$:

\begin{algorithm}
\caption{Additional relations for inner Tutte group}
\begin{algorithmic}[1]
\Require A matroid $M = ([n], \mathcal B)$ with a fixed basis $B_0 \in \mathcal B$
\Ensure A matrix $R_2$ over $\Z$ with $|\mathcal B|+1$ rows whose columns generate $\im(s)$ in $\T$
\Procedure{InnerTutteGroup}{$M, B_0$}
    \State $R_2 \gets \text{a $(|\mathcal{B}|+1) \times 0$ matrix over $\Z$}$
    \State $E \gets \{(a,b) \in B_0 \times ([n] \setminus B_0) \mid B_0 \setminus \{a\}\cup \{b\} \in \mathcal{B}\}$
    \State $\Gamma \gets \text{the bipartite graph with vertices $B_0 \sqcup ([n] \setminus B_0)$ and edges $E$}$
    \State $F \gets \text{a spanning forest of $\Gamma$}$
    \For{$\{a,b\}\in F$}
    \State $R_2 \gets R_2 \mid e_{B_0 \setminus \{ a \} \cup \{ b \}}$
    \Comment{columns of $R_2$ are standard basis vectors of $G$}
    \EndFor
    \State \textbf{return} $R_2$
\EndProcedure
\end{algorithmic}
\label{alg:innerTutteGroup}
\end{algorithm}

With the outputs $R_1$ and $R_2$ of \Cref{alg:TutteGroup,alg:innerTutteGroup} respectively, we can compute the Smith normal form of $R_1 \mid R_2$ to obtain a set of minimal generators of $\T^{(0)}$ and a projection $\rho^{(0)}: G \twoheadrightarrow \T^{(0)}$ factoring through $\rho$. 
This gives the multiplicative group and $\epsilon := \rho^{(0)}(e_\epsilon)$ for $F_M$, so it remains to compute the additive relations.
By \cite[Theorem 4.18]{Baker-Lorscheid20}, the 3-term additive relations are given by modular quadruples of hyperplanes, i.e. sets of 4 hyperplanes of $M$ that intersect in a corank 2 flat.
\Cref{alg:additiveRelations} constructs these additive relations:

\begin{algorithm}
\caption{Additive relations for the foundation}
\begin{algorithmic}[1]
\Require A matroid $M$ of rank $r$ with set of flats $\mathcal F$, and the projection $\rho^{(0)} : G \twoheadrightarrow \T^{(0)}$
\Ensure The list of fundamental pairs of $F_M$
\Procedure{FundamentalPairsFoundation}{$M, \mathcal{F}, \rho$}
    \State FP $\gets \{ \}$
    \State $\mathcal H \gets \{ F \in \mathcal F : \rk F = r - 1 \}$
    \State $\mathcal{F}_2 \gets \{ F \in \mathcal F : \rk F = r - 2 \}$
    \For{$X \in \mathcal{F}_2$}
    \State $I \gets \text{a maximal independent set in $X$}$
    \State $S \gets \{ H \in \mathcal{H} : X \subseteq H \}$
    \For{all $\{ H_1, H_2, H_3, H_4 \} \in \binom{S}{4}$}
    \State $a_i \gets$ any element in $H_i \setminus X$, for $i = 1, \ldots, 4$
    \State FP $\gets$ FP $\cup \; \{\{\rho^{(0)}(\Cr(I,a_1,a_2,a_3,a_4)), \rho^{(0)}(\Cr(I,a_1,a_3,a_2,a_4))\}\}$
    \EndFor
    \EndFor
    \State \textbf{return} FP 
\EndProcedure
\end{algorithmic}
\label{alg:additiveRelations}
\end{algorithm}

We note that the cross-ratios $\Cr(I,a_1,a_2,a_3,a_4)$ in \Cref{alg:additiveRelations} are \emph{non-degenerate} in the sense that any 2 of $\{ a_1, \ldots, a_4 \}$, along with $I$, forms a basis of $M$.
This is not the case for the cross-ratios $\Cr(I,i,m,j,n)$ in \Cref{alg:TutteGroup}, which are \emph{degenerate} -- see \cite[7.1]{BAKER2021107883}.

\subsection{Examples of pastures/foundations} \label{ssec:pastureExamples}

\begin{definition} [{\cite[Theorem 5.8]{Baker-Lorscheid20}}] \label{def:hexTypes}
There are 4 specific pastures, each with one hexagon, called $\U$, $\D$, $\H$, and $\F_3$, which are foundations of particular matroids:
\begin{itemize}
    \item $\U = \F_1^\pm \langle x,y\rangle \sslash \{x+y-1\}$ is the foundation of the uniform matroid $U(2, 4)$, called the \emph{near-regular partial field}.
    \item $\D = \F_1^\pm \langle x\rangle \sslash \{x+x-1\}$ is the foundation of the non-Fano matroid $F_7^-$, called the \emph{dyadic partial field}.
    \item $\H = \F_1^\pm \langle x\rangle \sslash \{x+x^{-1}-1, x^3 = -1\}$ is the foundation of the affine geometry $AG(2,3)$, called the \emph{hexagonal partial field}.
    \item $\F_3 = \F_1^\pm \sslash \{-1-1-1\}$ is the foundation of the matroid $T_8$ (\cite[Exercise 2.2.9]{Oxley92}), and is the finite field with 3 elements.
\end{itemize}
For any pasture $P$ with a fundamental pair $(a, b) \in \FP(P)$, there is a pasture morphism $\phi : \U \to P$ sending $x \mapsto a$, $y \mapsto b$.
We say that the hexagon corresponding to $(a, b)$ is of:
\begin{itemize}
    \item \emph{type} $\F_3$, if $\phi$ factors through $\U \to \F_3$;
    \item \emph{type} $\D$, if $\phi$ factors through $\U \to \D$, but not through $\U \to \F_3$;
    \item \emph{type} $\H$, if $\phi$ factors through $\U \to \H$, but not through $\U \to \F_3$;
    \item \emph{type} $\U$, if $\phi$ does not factor through $\U \to \D$ nor $\U \to \H$.
\end{itemize}
\end{definition}

Typically, the type of a hexagon is reflected in the number of distinct (fundamental) elements it contains (alternatively, the sizes of its $S_3$-orbits).
For instance, a hexagon is: of type $\F_3$ iff it consists of 1 distinct element; of type $\D$ iff it has a unique fundamental pair whose elements coincide; and of type $\H$ iff it consists of 2 distinct elements and is not of type $\D$ (or equivalently, consists of 1 distinct fundamental pair and is not of type $\F_3$). Thus e.g. a hexagon with $\ge 4$ distinct elements must be of type $\U$.
See \cite[Section 3.3]{Baker-Lorscheid18} for more.

Incidentally, we describe how to view a finite field as a pasture (which will be useful for applications of the algorithms in \Cref{sec:morphismAlgorithm}, see \Cref{sec:matrixReps}):

\begin{ex}[Finite fields] \label{ex:finiteFields}
For any finite field $\F_q$ with $q$ elements, the multiplicative group $\F_q^\times$ is cyclic of order $q-1$.
We have $\epsilon = -1$ (= 1 if char $\F_q = 2$).
For any $\beta \in \F_q \setminus \{0, 1\}$, there is a unique hexagon $( ( \beta, 1-\beta ), ( \tfrac{1}{\beta}, \tfrac{\beta-1}{\beta} ), ( \tfrac{1}{1-\beta}, \tfrac{\beta}{1-\beta} ) )$ containing $\beta$, and all hexagons of $\F_q$ arise in this way.

It is a straightforward exercise (left to the reader) to check that the finite field $\F_q$ has:
\begin{itemize}
    \item 1 hexagon of type $\F_3 \iff 3 \mid q$, consisting of $1$ element of $\F_q$,
    \item 1 hexagon of type $\H \iff q \equiv 1 \pmod 3$, consisting of $2$ elements of $\F_q$,
    \item 1 hexagon of type $\D \iff 2 \nmid q$ and $3 \nmid q$, consisting of $3$ elements of $\F_q$,
    \item All remaining elements of $\F_q \setminus \{0, 1\}$ are in hexagons of type $\U$.
\end{itemize}
\end{ex}

\section{Morphism Algorithm} \label{sec:morphismAlgorithm}

\subsection{Overview} \label{ssec:overview}
We now turn our attention to the following problem: given two finitely presented pastures $P_1, P_2$, provide an algorithm to compute all pasture morphisms $P_1 \to P_2$.
Our approach to solving this problem constitutes the main novel contribution of this paper.
By \Cref{lem:Hom}, a pasture morphism $P_1 \to P_2$ is equivalent to an abelian group homomorphism $P_1^\times \to P_2^\times$ that sends fundamental pairs in $P_1^\times$ to fundamental pairs in $P_2^\times$, i.e. we identify
\[
\Mor(P_1, P_2) \leftrightarrow \{ f \in \Hom(P_1^\times, P_2^\times) : (f(x), f(y)) \in \FP(P_2) \; \forall (x, y) \in \FP(P_1) \}
\]
(henceforth we denote pasture morphisms by $\Mor$, and reserve $\Hom$ for (abelian) group homomorphisms).
We may thus view the problem of computing pasture morphisms as a restricted homomorphism problem, i.e. finding all group homomorphisms where the images of certain elements are constrained to lie in a specified set.

\begin{rem}
Throughout, we always work with finitely generated abelian groups represented in Smith normal form, i.e. every finitely generated abelian group is presented as the cokernel of a diagonal matrix over $\Z$ (note that in such a presentation, the group is finite if and only if all diagonal entries are nonzero).
We also assume that subroutines for the following problems are given:

\begin{itemize}
    \item (HOM) Given finite abelian groups $G_1$ and $G_2$, compute the finite set $\Hom(G_1, G_2)$
    \item (MODSOLVE) Given $n \in \Z$ and matrices $A \in \Z^{r \times s}$, $B \in \Z^{1 \times s}$, determine if a solution $X \in \Z^{1 \times r}$ to $XA = B$ over $\Z/n\Z$ exists, and if so, return one.
 \end{itemize}
\end{rem}

We begin by reducing to the case that $P_1^\times$ is generated by its fundamental elements.
Consider the subpasture $P_0 \subseteq P_1$, defined by $P_0^\times := \langle \FE(P_1) \rangle$, $\epsilon_{P_0} := \epsilon_{P_1}$ (which is in $P_0^\times$ if $\FE(P_1) \ne \emptyset$) and $\FP(P_0) := \FP(P_1)$.
There is a short exact sequence of abelian groups
\[
  0 \to P_0^\times \xrightarrow{i} P_1^\times \xrightarrow{p} P_1^\times/P_0^\times \to 0.
\]
Applying $\Hom(\cdot, P_2^\times)$ gives a long exact sequence of abelian groups
\[
  0 \to \Hom(P_1^\times/P_0^\times, P_2^\times) \xrightarrow{p^*} \Hom(P_1^\times, P_2^\times) \xrightarrow{i^*} \Hom(P_0^\times, P_2^\times) \xrightarrow{\delta} \Ext^1(P_1^\times/P_0^\times,P_2^\times) \to \cdots
\]
It follows that $\Mor(P_1, P_2)$ is a union of cosets of $p^*(\Hom(P_1^\times/P_0^\times, P_2^\times))$, with representatives consisting of (preimages of) elements in $\Mor(P_0, P_2)$ which are in the image of $i^*$.
As $\im(i^*) = \ker(\delta)$ by exactness, this gives a computable description of $\Mor(P_1, P_2)$, once $\Mor(P_0, P_2)$ is known.

Thus (replacing $P_1$ by $P_0$) we assume henceforth that $P_1^\times = \langle \FE(P_1) \rangle$ (note in any case that by {\cite[p. 9]{Baker-Lorscheid20}} this is automatic if $P_1$ is the foundation of a matroid, which in view of \Cref{thm:UP} is the case of most interest).
Although this reduces the problem to a finite computation in principle (as there are now only finitely many possibilities for the image of each generator), there is still nontrivial work involved in giving an efficient algorithm in practice, as the next remark shows.

\begin{rem} \label{rem:naiveComp}
Let $P$ be the Pappus matroid.
The foundation of $P$ is a pasture $F_P$ with $11$ hexagons, all of type $\U$.
To compute representations of $P$ over $\F_8$ (which has $1$ hexagon, of type $\U$), one could in theory search over all $6^{11} = 362797056$ candidate set maps sending (a fundamental pair of) each hexagon in $F_P$ to a fundamental pair in $\F_8$.
However, it is not clear a priori which of these $6^{11}$ candidates come from group homomorphisms $(F_P)^\times \to (\F_8)^\times$, and in any case such a search would be extremely inefficient.
\end{rem}

We thus turn towards exploiting additional algebraic structure in the problem.
By the structure theorem of finitely generated abelian groups, for $i = 1, 2$ we can decompose $P_i^\times \cong T_i \oplus F_i$ where $F_i$ is free and $T_i$ is torsion (finite).
We denote the projection maps onto the free resp. torsion parts by $\pi_{F_i}$ resp. $\pi_{T_i}$.
This gives a direct sum decomposition 
\begin{equation} \label{eq:decomp}
    \Hom(P_1^\times, P_2^\times) \cong \Hom(T_1, P_2^\times) \oplus \Hom(F_1, P_2^\times) = \Hom(T_1, T_2) \oplus \Hom(F_1, P_2^\times)
\end{equation}
(note that $\Hom(T_1, F_2) = 0$).
Let $r_i$ denote the free rank of $F_i$, and consider the Smith normal form of $P_i^\times$, i.e. a presentation of $P_i^\times$ as the cokernel of an $(n_i + r_i) \times (n_i + r_i)$ diagonal matrix over $\Z$, where all $n_i$ nonzero diagonal entries appear first, ordered by consecutive divisibility.
In this way, the decomposition (\ref{eq:decomp}) realizes elements in $\Hom(P_1^\times, P_2^\times)$ as block matrices
\begingroup
\renewcommand*{\arraystretch}{1.8}
\begin{equation} \label{eq:matBlockForm}
\left[ \begin{array}{@{}c|c@{}}
  \begin{matrix}
  \psi_{n_2 \times n_1} \\
  \hline
  0_{r_2 \times n_1}
  \end{matrix}
   &
  C_{(n_2+r_2) \times r_1}
\end{array}\right]
\end{equation}
\endgroup
where $\psi \in \Hom(T_1, T_2)$, $C \in \Hom(F_1, P_2^\times)$.
We will obtain all elements in $\Mor(P_1, P_2)$ by determining all possibilities for $\psi$ and $C$.

We first address the simpler step of obtaining $\psi$.
Note that $\epsilon_i \in P_i^\times$ is always torsion, as $\epsilon_i^2 = 1$ in $P_i$.
Since $T_1, T_2$ are finite, we may use (HOM) to compute $\Hom(T_1, T_2)$, keeping only the homomorphisms which send $\epsilon_1$ to $\epsilon_2$.
In practice, this is not an issue for efficiency, as foundations of most common matroids have multiplicative torsion groups of small order (e.g. $\le 6$).
In the following, we will assume that a choice of $\psi$ has been fixed.

The harder step is computing the relevant subset of $\Hom(F_1, P_2^\times)$ for pasture morphisms, and the rest of the section will be devoted to this.
We now describe a general outline of the strategy.
For simplicity of notation, set $r := r_1 = \rk(F_1)$.
Notice that although $\Hom(F_1, P_2^\times) \cong (P_2^\times)^r$ is trivial to compute as a group, one is only interested in a particular (finite) subset.
Moreover, one does not have good control over the potential images (in $P_2^\times$) of minimal generators of $F_1$.

Our approach therefore is to first compute a \emph{full rank sublattice} of $F_1$, generated by projections of fundamental elements of $P_1$, see \Cref{ssec:FRS}.
This consists of $r$ fundamental elements $\{ x_1, \ldots, x_r \}$ in $P_1$, such that $\{ \pi_{F_1}(x_1), \ldots, \pi_{F_1}(x_r) \}$ generate a (free) subgroup $H \subseteq F_1$ of rank $r$.
We use this sublattice to traverse the search space of candidate morphisms using a depth-first search, see \Cref{ssec:DFS}.
Finally, we assemble the ingredients (i.e. build the possibilities for $C$ in (\ref{eq:matBlockForm})) to obtain all possible morphisms in \Cref{ssec:assembly}.

\subsection{Full Rank Sublattice} \label{ssec:FRS}

In this section we give a construction for a full rank sublattice of the free part of the multiplicative group of a pasture $P$ (assuming $P$ is finitely related and $P^\times = \langle \FE(P) \rangle$), which is generated by (projections of) fundamental elements.
In general there are many possible choices for such a sublattice, as well as its generators.
Although any such sublattice reveals insight into the structure of the pasture, and can be used to compute morphisms, the algorithm we give here has definite advantages over a ``random'' full rank sublattice, cf. \Cref{ssec:DFS,ssec:efficiency}.

Naively, one might try to iteratively build up a full rank sublattice by a ``greedy'' algorithm: i.e. choose fundamental pairs such that adding the pair to the previously generated subgroup increases the rank by $2$.
However, this approach turns out to be precisely the opposite of what is desirable, for computing morphisms.
Indeed, rather than searching for fundamental pairs which are independent, we search for fundamental pairs which have nontrivial dependency relations with the previous pairs.
To be precise, we introduce the following terminology:

\begin{definition} \label{def:types}
Let $L \subseteq \Z^r$ be any subgroup, with $\rk(L) =: n$, and $u, v \in \Z^r$. We say that the pair $( u, v )$ (with respect to $L$) is:
\begin{itemize}
    \item \emph{Type 1}: if $\rk(L + \langle u \rangle) = n$, $\rk(L + \langle v \rangle) = n+1$,
    \item \emph{Type 2}: if $\rk(L + \langle u \rangle) = \rk(L + \langle v \rangle) = \rk(L + \langle u, v \rangle) = n+1$,
    \item \emph{Type 3}: if $\rk(L + \langle u \rangle) = \rk(L + \langle v \rangle) = n+1$, $\rk(L + \langle u, v \rangle) = n+2$,
    \item \emph{Type 4}: if $\rk(L + \langle u \rangle) = \rk(L + \langle v \rangle) = n$.
\end{itemize}
\end{definition}

Note that any pair of vectors is one of the 4 types listed above (up to reordering in the case of type 1).
In essence, our algorithm for producing a full rank sublattice proceeds by choosing fundamental pairs in the order of preference listed by the types above, as detailed in \Cref{alg:fullRankSublattice}.
Although type 4 pairs do not increase the rank (and thus are never chosen to be in the full rank sublattice $L$), we still record the presence of any type 4 pairs whenever they occur at an intermediate step, to be used in \Cref{alg:depthFirstSearch}.

\begin{algorithm}
\caption{Full rank sublattice of the free part of a pasture}
\label{alg:fullRankSublattice}
\begin{algorithmic}[1]
\Require A pasture $P$ of free rank $r = \rk(P^\times)$, with $P^\times = \langle \FE(P) \rangle$
\Ensure A list $\mathcal{L} = \{x_1, \ldots, x_r \} \subseteq \FE(P)$ with $\{ \pi_F(x_1), \ldots, \pi_F(x_r) \}$ linearly independent, a list $\mathcal{R}$ of rule pairs, a list $\mathcal{S}$ of lists of type 4 pairs (all lists are ordered)
\Procedure{FullRankSublattice}{P}
    \State $r \gets \rk(P^\times)$
    \State $\mathcal{L} \gets \{ \}$
    \State $\mathcal{R} \gets \{ \}$
    \State $\mathcal{S} \gets \{ \}$
    \State $n \gets 0$
    \Repeat
        \If{$\exists (u,v) \in \FP(P)$ with $(\pi_F(u), \pi_F(v))$ type 1 wrt $\langle \pi_F(\mathcal{L}) \rangle$}
        \State $\mathcal{L} \gets \mathcal{L} \cup \{v\}$
            \State $\exists (c_1, \ldots c_{n+1}) \in \Z^{n+1} \setminus \{ \bfz \}$: $c_{n+1} \pi_F(u) + \sum_{i=1}^n c_i \pi_F(x_i) = 0$
            \State $\mathcal{R} \gets \mathcal{R} \cup \{ \{c_{n+1} u + \sum_{i=1}^n c_i x_i, (c_1, \ldots c_{n+1})\} \}$
        \ElsIf{$\exists (u,v) \in \FP(P)$ with $(\pi_F(u), \pi_F(v))$ type 2 wrt $\langle \pi_F(\mathcal{L}) \rangle$}
            \State $\mathcal{L} \gets \mathcal{L} \cup \{v\}$
            \State $\exists (c_1, \ldots c_{n+2}) \in \Z^{n+2} \setminus \{ \bfz \}$: $c_{n+1} \pi_F(u) + c_{n+2} \pi_F(v) + \sum_{i=1}^n c_i \pi_F(x_i) = 0$
            \State $\mathcal{R} \gets \mathcal{R} \cup \{ \{c_{n+1} u + c_{n+2} v + \sum_{i=1}^n c_i x_i, (c_1, \ldots c_{n+2})\} \}$
        \ElsIf{$\exists (u,v) \in \FP(P)$ with $(\pi_F(u), \pi_F(v))$ type 3 wrt $\langle \pi_F(\mathcal{L}) \rangle$}
            \State $\mathcal{L} \gets \mathcal{L} \cup \{u, v\}$
            \State $\mathcal{R} \gets \mathcal{R} \cup \{ \{ 0, 0 \}, \{ 0, 1 \}\}$
            \State $\mathcal{S} \gets \mathcal{S} \cup \{ \{ \} \}$
        \EndIf
        \State $n \gets \rk(\langle \pi_F(\mathcal{L}) \rangle)$
        \State $\mathcal{S} \gets \mathcal{S} \cup \{ \{ ( u, v ) \in \FP(P) \mid ( \pi_F(u), \pi_F(v) ) \text{ type } 4 \text{ wrt } \langle \pi_F(\mathcal{L}) \rangle \} \}$
% \algstore{alg1}
% \end{algorithmic}
% \end{algorithm}

% \begin{algorithm}
% \begin{algorithmic}[1]
% \algrestore{alg1}
    \Until{$n = r$}
    \State \textbf{return $\{\mathcal{L}, \mathcal{R}, S\}$} 
\EndProcedure
\end{algorithmic}
\end{algorithm}

\begin{rem} \label{rem:alg1Remarks}
A few remarks concerning \Cref{alg:fullRankSublattice} are in order:

i) Termination is guaranteed by the assumption $P^\times = \langle \FE(P) \rangle$.

ii) When choosing the coefficients $c_i$ (lines 10 and 14), we always choose a primitive set of coefficients, i.e. $(c_1, c_2, \ldots) = 1$.

iii) At the end of \Cref{alg:fullRankSublattice}, every fundamental pair of $P$ will appear either as part of $\mathcal{L}$, or as a type 4 pair in $\mathcal{S}$: at the last iteration of the loop (when $n = r$), every remaining fundamental pair will become of type $4$.

iv) Once computed, the data of the full rank sublattice $\{\mathcal{L}, \mathcal{R}, S\}$ can be cached as part of the source pasture $P_1$, and any subsequent computation of morphisms $\Mor(P_1, P_2)$ (for various pastures $P_2$) may reuse the data $\{\mathcal{L}, \mathcal{R}, S\}$.
\end{rem}

\subsection{Depth-First Search} \label{ssec:DFS}

In this section, we describe an algorithm for generating candidate maps in $\Hom(H, P_2^\times)$, where $H := \langle \pi_{F_1}(x_1), \ldots, \pi_{F_1}(x_r) \rangle$ (with $x_i \in \FE(P_1)$) is the full rank sublattice in \Cref{ssec:FRS}.
Recall that a choice of $\psi \in \Hom(T_1, T_2)$ has been fixed.

As mentioned in \Cref{rem:naiveComp}, the search space for candidate maps can be extremely large, even for matroids of moderate size (e.g. $\le 10$ elements).
Thus, the primary goal is to cut down the search space as much as possible.
We will see (\Cref{ssec:efficiency}) how the choice of sublattice in \Cref{ssec:FRS} is motivated by this goal.
Intuitively, by choosing type 1 (and type 2) pairs over type 3 pairs, as well as remembering the type 4 pairs at each step, one can impose more conditions upon a candidate morphism, thereby eliminating portions of the search space.

To be precise, we iteratively build a (rooted) tree representing the search space of candidate maps, see \Cref{fig:tree}.

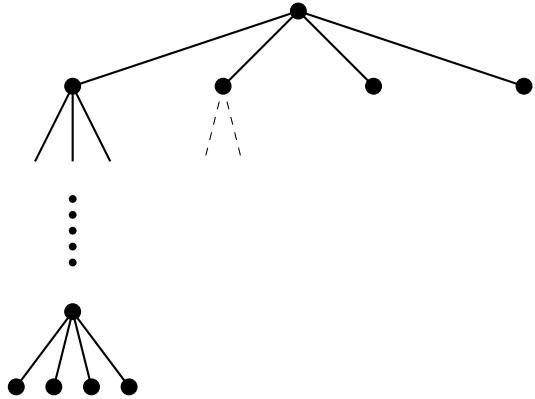
\begin{figure}[h]
    \centering
    \begin{tikzpicture}
\draw[thick] (0,5) -- (-3,4);
\draw[thick] (0,5) -- (-1,4);
\draw[thick] (0,5) -- (1,4);
\draw[thick] (0,5) -- (3,4);
\draw[thick] (-3,4) -- (-3.5,3);
\draw[thick] (-3,4) -- (-3,3);
\draw[thick] (-3,4) -- (-2.5,3);
\draw[dashed] (-1,4) -- (-1.25,3);
\draw[dashed] (-1,4) -- (-0.75,3);
\draw[thick] (-3,1) -- (-3.75,0);
\draw[thick] (-3,1) -- (-3.25,0);
\draw[thick] (-3,1) -- (-2.75,0);
\draw[thick] (-3,1) -- (-2.25,0);
\draw[line width=3pt, line cap=round, dash pattern=on 0pt off 2\pgflinewidth] (-3,2.5) -- (-3,1.5);
\filldraw (0,5) circle (3pt);
\filldraw (-3,4) circle (3pt);
\filldraw (-1,4) circle (3pt);
\filldraw (1,4) circle (3pt);
\filldraw (3,4) circle (3pt);
\filldraw (-3,1) circle (3pt);
\filldraw (-3.75,0) circle (3pt);
\filldraw (-3.25,0) circle (3pt);
\filldraw (-2.75,0) circle (3pt);
\filldraw (-2.25,0) circle (3pt);
\end{tikzpicture}
\caption{Search tree}
\label{fig:tree}
\end{figure}

Recalling that $r = \rk(H) = \rk(P_1^\times)$, the tree will have $r+1$ levels, with the root at level $0$.
The nodes at the bottom level $r$ correspond to (fully specified) candidate maps, to be handled in \Cref{ssec:assembly}.
In general, the nodes at each level correspond to partial maps, where the images of an initial segment of generators of $H$ have been specified.

We traverse the tree using a depth-first search.
In our setting, this is preferable over breadth-first search for a number of reasons, the most important of which is that it allows us to use the information obtained in \Cref{alg:fullRankSublattice} to eliminate branches in the tree from consideration.
Additionally, it allows for an algorithm which is far more efficient in terms of memory (linear vs exponential in $r$).

Initially, the tree consists of a single node (the root), which is taken as the current node.
After deletion of a node, the current node passes to the next node in the same level (if such a node exists).
The search proceeds via cases, depending on the level of the current node.
\begin{itemize}
    \item If the current level is $< r$ and is nonempty, then populate the next level, and set the current node to be the first node in the new level.
    \item If the current node is at level $r$, then check to see if it gives any pasture morphisms (see \Cref{ssec:assembly}), and then delete it.
    \item If the current level is empty, then backtrack up one level and delete the first node.
    \item The search terminates when the root node is deleted.
\end{itemize}

\begin{rem} \label{rem:rankZeroSearch}
Note that if $r = 0$ (e.g. as in the case of $P_1 = \H$ or $\F_3$), then $P_1^\times = T_1$ is torsion, hence the map $P_1^\times \to P_2^\times$ is already specified (as $\psi$). Thus we may assume that $r > 0$, so that the first case above indeed occurs.
\end{rem}

It remains to explain how new levels of the tree are populated.
Suppose that the current level is $n$ ($0 \le n < r$), and we want to populate the tree in level $n+1$ with a list $\mathcal{T}_{n+1}$ of candidates.
Inductively we may assume that levels $\mathcal{T}_1, \ldots, \mathcal{T}_{n}$ have already been populated, and set $y_1, \ldots, y_{n}$ as the first elements of $\mathcal{T}_1, \ldots, \mathcal{T}_{n}$ respectively.
We construct $\mathcal{T}_{n+1}$ as the list of all possible images of $x_{n+1}$, assuming that the images of $x_1, \ldots, x_{n}$ are $y_1, \ldots, y_{n}$ respectively (i.e. $x_1$ is mapped to $y_1$, $x_2$ is mapped to $y_2$, etc.).
Recall from \Cref{alg:fullRankSublattice} that corresponding to $x_{n+1}$, there is a \emph{rule pair} $r_{n+1} \in \mathcal{R}$, as well as a list of type 4 pairs $s_{n+1} \in \mathcal{S}$.
We have the following cases:

\begin{itemize}
    \item If $r_{n+1} = \{\tau, (c_1, \ldots, c_{n+1})\}$, then $x_{n+1}$ is the second member of a type 1 pair; set 
    $\mathcal{T}_{n+1} = \{ y \in \FE(P_2) \mid \exists x \text{ a partner of } y \text{ s.t. } c_1 y_1 + \ldots + c_{n} y_{n} + c_{n+1} x = \psi(\tau) \}$.
    \item If $r_{n+1} = \{\tau, (c_1, \ldots, c_{n+2})\}$, then $x_{n+1}$ is the first member of a type 2 pair; set
    $\mathcal{T}_{n+1} = \{ y \in \FE(P_2) \mid \exists x \text{ a partner of } y \text{ s.t. } c_1 y_1 + \ldots + c_{n} y_{n} + c_{n+1} x + c_{n+2} y = \psi(\tau) \}$.
    \item If $r_{n+1} = \{ 0, 0 \}$, then $x_{n+1}$ is the first member of a type 3 pair; set $\mathcal{T}_{n+1} = \FE(P_2)$.
    \item If $r_{n+1} = \{ 0, 1 \}$, then $x_{n+1}$ is the second member of a type 3 pair; set $\mathcal{T}_{n+1} = \{ y \in \FE(P_2) \mid y \text{ is a partner of } y_n \}$.
\end{itemize}

Next, we remove from $\mathcal{T}_{n+1}$ any candidates which do not send all type 4 pairs in $s_{n+1}$ to fundamental pairs in $P_2$.
This is possible using the same procedure for type 1 and type 2 pairs, since both elements in a type 4 pair are linearly dependent (over $\Z$) on $x_1, \ldots, x_n$.
Once this is done, we adjoin $\mathcal{T}_{n+1}$ as the $(n+1)^\text{st}$ level of the tree, and continue the search.

To see why this works, consider the case of a type 1 pair $( u, v )$.
When $v$ is added in \Cref{alg:fullRankSublattice}, there is a dependence relation $c_1 \pi_{F_1}(x_1) + \ldots + c_{n} \pi_{F_1}(x_{n}) + c_{n+1} \pi_{F_1}(u) = 0$.
This means that $\pi_{F_1}(c_1 x_1 + \ldots + c_{n} x_{n} + c_{n+1} u) = 0$, i.e. $\tau := c_1 x_1 + \ldots + c_{n} x_{n} + c_{n+1} u$ is torsion in $P_1^\times$.
Now any pasture morphism $f \in \Mor(P_1, P_2)$ must send $u$ to a fundamental element $x \in \FE(P_2)$, hence
\begin{align*}
    \tau &= c_1 x_1 + \ldots + c_n x_n + c_{n+1} u \\
    \implies f(\tau) &= f(c_1 x_1 + \ldots + c_n x_n) + f(c_{n+1} u) \\
    \implies \psi(\tau) &= c_1 f(x_1) + \ldots c_n f(x_n) + c_{n+1} f(u) \\
    \implies \psi(\tau) &= c_1 y_1 + \ldots + c_n y_n + c_{n+1} x
\end{align*}
After obtaining all fundamental elements $x \in \FE(P_2)$ satisfying this condition, we take all partners of such $x$ as all possible images of $v$.

We give the full procedure in \Cref{alg:depthFirstSearch}.

\algrenewcommand\algorithmicindent{0.9em}%

\begin{algorithm}
\caption{Depth-first search for pasture morphisms}
\label{alg:depthFirstSearch}
\begin{algorithmic}[1]
\Require Pastures $P_1, P_2$, full rank sublattice data $\{\mathcal{L}, \mathcal{R}, S\}$ for $P_1$, $\psi \in \Hom(T_1, T_2)$
\Ensure A list of pasture morphisms $\mathcal{M} \subseteq \Hom(P_1^\times, P_2^\times)$
\Procedure{DepthFirstSearch}{$P_1, P_2, \mathcal{L}, \mathcal{R}, \mathcal{S}, \psi$}
\State $\mathcal{M} \gets \{ \}$
\State $\mathcal{T}_0 \gets \{0\}$
% \State $\mathcal{T}_i \gets \{ \}$, for $i=1, \ldots, r$
\State $l \gets 0$
\State $x_i \gets i^\text{th}$ element of $\mathcal{L}$, for $i = 1, \ldots, r$
% \State $y_0 \gets 0$
 \While{$\mathcal{T}_0 \neq \{ \}$}
    \If{$\mathcal{T}_l = \{ \}$}
    \State $l \gets l-1$
    \State $\mathcal{T}_l \gets \mathcal{T}_l \setminus \{ y_l \}$ % \text{first element of } \mathcal{T}_l
%     ; {\bf continue}
    \ElsIf{$l < r$}
%     \State $y_{i} \gets$ first element of $\mathcal{T}_i$, for $i = 0, \ldots, l$
    \State $y_l \gets$ first element of $\mathcal{T}_l$
    \State $l \gets l+1$
    \If{$\mathcal{R}_l = \{\tau_l,(c_1, \ldots c_{l})\}$}
    \Comment{type 1}
    \State $\mathcal{P}_l \gets \{ x \in \FE(P_2) : \psi(\tau_l) = \sum_{i=1}^{l-1} c_i y_{i} + c_{l} x \}$
    \State $\mathcal{T}_l \gets \{ y\in \FE(P_2) : \exists x \in \mathcal{P}_l$ s.t. $( x,y ) \in \FP(P_2)\}$ 
    \ElsIf{$\mathcal{R}_l = \{\tau_l,(c_1, \ldots c_{l+1})\}$}
    \Comment{type 2}
    \State $\mathcal{P}_l \gets \{ ( x,y ) \in \FP(P_2) : \psi(\tau_l) = \sum_{i=1}^{l-1} c_i y_{i} + c_{l} x + c_{l+1}y \}$
    \State $\mathcal{T}_l \gets \{ x \in \FE(P_2) : \exists y$ s.t. $( x,y ) \in \mathcal{P}_l\}$
    \ElsIf{$\mathcal{R}_l = \{ 0, 0 \}$}
    \Comment{first member of type 3}
    \State $\mathcal{T}_l \gets \FE(P_2)$
    \ElsIf{$\mathcal{R}_l = \{ 0, 1 \}$}
    \Comment{second member of type 3}
    \State $\mathcal{T}_l \gets$ Partners of $y_{l-1}$
    \EndIf
    \For{$y \in \mathcal{T}_l$}
    \For{$( u,v ) \in \mathcal{S}_l$} 
    \Comment type 4
    \State $\exists (c_0,\ldots, c_{l})$: $c_0\pi_{F_1}(u) = \Sigma_{i=1}^{l} c_i\pi_{F_1}(x_i); y_u \gets \psi(c_0u - \Sigma_{i=1}^{l} c_ix_i) +\Sigma_{i=1}^{l-1} c_iy_{i}+c_{l}y$
%     \State 
%     \State 
    \State $\exists (d_0,\ldots, d_{l})$: $\! d_0\pi_{F_1}(v) = \Sigma_{i=1}^{l} d_i\pi_{F_1}(x_i); y_v \gets\!\psi(d_0v - \Sigma_{i=1}^{l} d_ix_i) + \Sigma_{i=1}^{l-1} d_iy_{i}+d_{l}y$
%     \State 
%     \If{$\{ \psi(\tau_u)+\Sigma_{i=1}^{l-1} c_iy_{i}+c_{l}y,\psi(\tau_v)+\Sigma_{i=1}^{l-1} d_iy_{i}+d_{l}y \} \notin \FP(P_2)$} 
    \If{$( c_0 z_1, d_0 z_2 ) \ne ( y_u, y_v ) \; \forall ( z_1, z_2 ) \in \FP(P_2)$}
    \State $\mathcal{T}_l \gets \mathcal{T}_l \setminus \{ y \}$; {\bf break}
    \EndIf
    \EndFor
    \EndFor
%     \State $y_l \gets$ first element of $\mathcal{T}_l$
    \ElsIf{$l=r$} 
    \For{$y \in \mathcal{T}_r$}
    \State $\mathcal{M} \gets \mathcal{M} \cup \{ \text{assembled morphisms from} (y_{1} \mid \ldots \mid y_{r-1} \mid y) \text{ as in \Cref{ssec:assembly}} \}$
%     \State \mathcal{C}$
%     Mor $\gets$ Mor $ \cup \; \{ y_{1} \mid \ldots \mid y_{r-1} \mid y \}$
    \EndFor
    \State $\mathcal{T}_r \gets \{ \}$
    \EndIf
%     \EndIf
    \EndWhile
    \State \textbf{return} $\mathcal{M}$
\EndProcedure
\end{algorithmic}
\end{algorithm}

\subsection{Final assembly} \label{ssec:assembly}

At the bottom level of the tree, we have fixed a map in $\Hom(H, P_2^\times)$, sending the basis of $H$ consisting of fundamental elements in $P_1$ to fundamental elements in $P_2$.
We now describe how to obtain pasture morphisms in $\Mor(P_1, P_2)$ from this (recall that as in \Cref{ssec:DFS}, a choice of $\psi \in \Hom(T_1, T_2)$ has also been fixed).
We continue to use the notation of the previous subsections (cf. (\ref{eq:decomp}), (\ref{eq:matBlockForm})).
For the remainder of this subsection, let $A$
denote the $(n_1 + r_1) \times r_1$ matrix whose columns are the fundamental elements $\{x_1, \ldots, x_{r_1}\}$ of $P_1$ generating the full rank sublattice $H$ (as chosen by \Cref{alg:fullRankSublattice}).
We view $A$ as a block matrix
\[
A = \begin{bmatrix}
x_1 & \ldots & x_{r_1}
\end{bmatrix} = \begin{bmatrix}
\pi_{T_1}(A) \\
\pi_{F_1}(A)
\end{bmatrix}
\]
where $\pi_{T_1}(A)$ is $n_1 \times r_1$, and $\pi_{F_1}(A)$ is $r_1 \times r_1$ (note that $\pi_{F_1}(A)$ concretely realizes the injection $H \hookrightarrow F_1$).
We also set 
\[
B := \begin{bmatrix}
y_1 & \ldots & y_{r_1}
\end{bmatrix} = 
\begin{bmatrix}
\pi_{T_2}(B) \\
\pi_{F_2}(B)
\end{bmatrix}
\]
as the matrix representing a given $f\in \Hom(H, P_2^\times)$ (as chosen by \Cref{alg:depthFirstSearch}), where $\pi_{T_2}(B)$ is $n_2 \times r_1$, and $\pi_{F_2}(B)$ is $r_2 \times r_1$ (note: the columns $y_1, \ldots, y_{r_1}$ of $B$ are fundamental elements of $P_2$, which are candidate images of $x_1, \ldots, x_{r_1}$).
Recall that the goal is to find all possibilities for $C$ in (\ref{eq:matBlockForm}).
We also view $C$ as a block matrix, with the same size blocks as $B$:
\[
C = 
\begin{bmatrix}
\pi_{T_2}(C) \\
\pi_{F_2}(C)
\end{bmatrix}.
\]

\begin{prop} \label{prop:assembly}
Let
$M := \begin{bmatrix}
  \psi & \pi_{T_2}(C) \\
  0 & \pi_{F_2}(C)
  \end{bmatrix}$
be a matrix representing a map $\varphi \in \Mor(P_1, P_2)$.
Then $\varphi(x_i) = y_i$ for all $1 \le i \le r_1$ if and only if the following conditions hold:
\begin{enumerate}[label=(\roman*), font=\normalfont, wide=0pt]
\item $\pi_{F_2}(C) = \pi_{F_2}(B) \cdot \pi_{F_1}(A)^{-1}$ is defined over $\Z$
\item $\pi_{T_2}(C) \cdot \pi_{F_1}(A) = \pi_{T_2}(B) - \psi(\pi_{T_1}(A))$.
\end{enumerate}
\end{prop}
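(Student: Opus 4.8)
The plan is to directly translate the equation $\varphi(x_i) = y_i$ for all $i$ into the matrix conditions (i) and (ii) by exploiting the block decomposition. First I would unwind the definitions: the matrix $M$ representing $\varphi$ acts on a column vector representing an element of $P_1^\times$ (in the Smith-normal-form coordinates $T_1 \oplus F_1$) by left multiplication, producing a column vector in the coordinates $T_2 \oplus F_2$. Since the columns of $A$ are exactly the coordinate vectors of $x_1, \ldots, x_{r_1}$, the statement ``$\varphi(x_i) = y_i$ for all $i$'' is precisely the single matrix equation $M A = B$, where $B$ has columns the coordinate vectors of $y_1, \ldots, y_{r_1}$. (One should note this is an equation of integer matrices because everything has been put in Smith normal form, with the torsion coordinates understood modulo the corresponding diagonal entries; I would remark that the torsion block identity is to be read in $T_2$, i.e. modulo the elementary divisors, while the free block identity is a genuine identity over $\Z$.)

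Next I would carry out the block multiplication. Writing $M = \begin{bmatrix} \psi & \pi_{T_2}(C) \\ 0 & \pi_{F_2}(C) \end{bmatrix}$ and $A = \begin{bmatrix} \pi_{T_1}(A) \\ \pi_{F_1}(A) \end{bmatrix}$, we get
\[
M A = \begin{bmatrix} \psi\,\pi_{T_1}(A) + \pi_{T_2}(C)\,\pi_{F_1}(A) \\ \pi_{F_2}(C)\,\pi_{F_1}(A) \end{bmatrix},
\]
so $MA = B = \begin{bmatrix} \pi_{T_2}(B) \\ \pi_{F_2}(B) \end{bmatrix}$ breaks into the bottom-block equation $\pi_{F_2}(C)\,\pi_{F_1}(A) = \pi_{F_2}(B)$ and the top-block equation $\psi\,\pi_{T_1}(A) + \pi_{T_2}(C)\,\pi_{F_1}(A) = \pi_{T_2}(B)$. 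The latter rearranges immediately to condition (ii). For the former, the key point is that $\pi_{F_1}(A)$ is a square $r_1 \times r_1$ integer matrix, and by the construction in \Cref{alg:fullRankSublattice} the images $\pi_{F_1}(x_1), \ldots, \pi_{F_1}(x_{r_1})$ are linearly independent, so $\pi_{F_1}(A)$ is invertible over $\Q$; hence there is a solution $\pi_{F_2}(C)$ over $\Q$, necessarily equal to $\pi_{F_2}(B)\,\pi_{F_1}(A)^{-1}$, and the content of condition (i) is exactly the requirement that this rational solution actually have integer entries (which is what is needed for $M$ to define an honest group homomorphism $F_1 \to P_2^\times$). I would present the argument as an ``if and only if'': given $\varphi$ with $\varphi(x_i) = y_i$, conditions (i), (ii) follow from $MA = B$ and invertibility of $\pi_{F_1}(A)$; conversely, if (i), (ii) hold then the block computation above shows $MA = B$, i.e. $\varphi(x_i) = y_i$.

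The main obstacle, such as it is, is bookkeeping rather than mathematics: being careful that the torsion-block equation is an equation modulo the elementary divisors of $T_2$ (so ``$=$'' in (ii) means equality of the induced maps $F_1 \to T_2$, equivalently equality of integer matrices modulo the diagonal of $T_2$'s presentation), and making sure the claim that $M$ ``represents a map in $\Mor(P_1,P_2)$'' is used correctly — we are not reproving that such $M$ gives a pasture morphism, only identifying when it sends each $x_i$ to $y_i$, so the fundamental-pair conditions play no role here and are handled separately in \Cref{ssec:DFS}. I would also explicitly invoke that $\pi_{F_1}(A)$ realizes the inclusion $H \hookrightarrow F_1$ of the full rank sublattice, so its nonvanishing determinant is guaranteed, making $\pi_{F_1}(A)^{-1}$ meaningful over $\Q$ and the integrality condition (i) the only real constraint on the free block.
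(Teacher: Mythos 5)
Your proof is correct and takes essentially the same approach as the paper: both reduce $\varphi(x_i) = y_i$ for all $i$ to the single matrix equation $MA = B$ and then read off conditions (i) and (ii) from the block multiplication. Your additional remarks on the modular interpretation of the torsion block and the invertibility of $\pi_{F_1}(A)$ over $\Q$ are consistent with (and slightly more explicit than) the paper's discussion immediately following the proposition.
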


\begin{proof}
Observe that $\varphi(x_i) = y_i$ for all $1 \le i \le r_1$ is equivalent to $MA = B$, i.e.
\[
\begin{bmatrix}
  \pi_{T_2}(B) \\
  \pi_{F_2}(B)
  \end{bmatrix} = 
\begin{bmatrix}
  \psi & \pi_{T_2}(C) \\
  0 & \pi_{F_2}(C)
  \end{bmatrix}
\begin{bmatrix}
  \pi_{T_1}(A) \\
  \pi_{F_1}(A)
  \end{bmatrix} = 
\begin{bmatrix}
  \psi(\pi_{T_1}(A)) + \pi_{T_2}(C) \cdot \pi_{F_1}(A) \\
  \pi_{F_2}(C) \cdot \pi_{F_1}(A)
  \end{bmatrix}.
\qedhere \]
\end{proof}

It follows from \Cref{prop:assembly} that $\pi_{F_2}(C)$ is uniquely determined by $A$ and $B$, once well-defined (i.e. exists over $\Z$ -- note that $\pi_{F_1}(A)$ always has an inverse over $\Q$, as it represents an embedding of free abelian groups of rank $r_1$).
On the other hand, $\pi_{T_2}(C)$ need not be uniquely determined by $A$ and $B$, and to obtain all possibilities for $\pi_{T_2}(C)$, we proceed row by row.
Let
\[
P_2^\times := \Z/a_1\Z \oplus \ldots \oplus \Z/a_{n_2}\Z \oplus \Z^{r_2}
\]
be the invariant factor decomposition of $P_2^\times$.
As usual, we express $P_2^\times$ in Smith normal form as the cokernel of a diagonal matrix $\diag(a_1, \ldots, a_{n_2}, 0, \ldots, 0)$, so that the rows of $\pi_{T_2}(B)$ (and $\pi_{T_2}(C)$) are viewed over $\Z/a_1\Z, \ldots, \Z/a_{n_2}\Z$ respectively.

Now fix $1 \le i \le n_2$.
Then by \Cref{prop:assembly}, $[C]_i = [\pi_{T_2}(C)]_i$ must satisfy $[C]_i \cdot \pi_{F_1}(A) = [\pi_{T_2}(B) - \psi(\pi_{T_1}(A))]_i$ as row vectors over $\Z/a_i \Z$ (where $[\cdot]_i$ denotes the $i^\text{th}$ row of a matrix).
Thus we may use (MODSOLVE) to find a solution $[C]_i$, if one exists.
Note however that we only assume that (MODSOLVE) returns a \emph{single} solution, although in actuality one needs to find \emph{all} solutions (which correspond precisely to all possibilities for $[\pi_{T_2}(C)]_i$).

One potential remedy to this would be to assume access to an improved version of (MODSOLVE), which finds all solutions.
However, we take a different approach: setting $Q := F_1/H$, which is a torsion (finite) abelian group, we have a short exact sequence of abelian groups
\[
    0 \to H \xrightarrow{\pi_{F_1}(A)} F_1 \xrightarrow{p} Q \to 0
\]
and applying $\Hom(\cdot, P_2^\times)$ gives a long exact sequence
\begin{equation} \label{eq:LES}
    0 \to \Hom(Q,P_2^\times) \to \Hom(F_1, P_2^\times) \xrightarrow{\alpha} \Hom(H, P_2^\times) \to \Ext^1(Q,P_2^\times) \to \cdots
\end{equation}
In this setting, the goal is to find all lifts of $B \in \Hom(H, P_2^\times)$ via $\alpha$ (note that this provides another proof of \Cref{prop:assembly}(i), since $\alpha$ is given by composition with $\pi_{F_1}(A)$, i.e. $\alpha(\widetilde{f}) = \widetilde{f} \cdot \pi_{F_1}(A)$).
The reasoning above shows that we may use (MODSOLVE) to find a single lift of $B$ (or conclude that none exist).
Then by the exact sequence (\ref{eq:LES}), any two lifts of $B$ differ by an element of $\ker \alpha$, which is the (isomorphic) image of $\Hom(Q, P_2^\times)$ in $\Hom(F_1, P_2^\times)$. 
Since $\Hom(Q, P_2^\times) = \Hom(Q, T_2)$ can in turn be computed by an application of (HOM), this provides a method of determining all lifts of $B$.
Finally, we take as choices for $C$ the lifts of $B$ which send all type 4 pairs at level $r$ to fundamental pairs in $P_2$, which are valid pasture morphisms by \Cref{lem:Hom} and \Cref{rem:alg1Remarks}(iii).

\subsection{Implementation} \label{ssec:options}
Our algorithm for computing pasture morphisms (being comprised of \Cref{alg:fullRankSublattice,alg:depthFirstSearch,ssec:assembly}) has been implemented in Macaulay2 \cite{M2}, due to its capabilities with linear algebra over $\Z$ (e.g. routines for (HOM) and (MODSOLVE)), and the existing \texttt{Matroids} package \cite{chen2019matroids}.
We now list some useful modifications to our algorithm.

First, one is often interested in knowing whether a given matroid $M$ is representable over a specific pasture $P$.
By \Cref{thm:UP}, this is equivalent to the statement $\Mor(F_M, P) \ne \emptyset$.
Thus, it is useful to be able to solve the decision problem of whether or not the set of morphisms is empty (as opposed to computing the entire set).
To do this, one can simply terminate the depth-first search in \Cref{ssec:DFS} as soon as a single candidate is found to be a valid morphism (as in \Cref{ssec:assembly}).
This has been implemented in our Macaulay2 code, and is specified by an optional argument \texttt{FindOne => true}.

Another task of interest is to find isomorphisms between two pastures.
By \Cref{lem:iso}, it suffices to select morphisms whose corresponding group homomorphism in $\Hom(P_1^\times, P_2^\times)$ is given by an invertible matrix.
This can be ensured by imposing suitable rank conditions for candidates in the depth-first search (as well as invertibility of the lifts found in \Cref{ssec:assembly}), i.e. only candidates which increase the rank of the partially-specified morphism are added at each level.
These rank conditions are efficient to check, and have the benefit of reducing the number of candidates even further, sometimes greatly so.
% Example: Aut(F_V) vs End(F_V), where V = Vamos matroid.
In our Macaulay2 implementation, this is specified by an optional argument \texttt{FindIso => true}.

Finally, one can combine the two modifications above, which solves the decision problem of whether two pastures are isomorphic.
This is particularly useful for classifying matroids via their foundations.

\subsection{Efficiency} \label{ssec:efficiency}

We now discuss the efficiency of \Cref{alg:fullRankSublattice,alg:depthFirstSearch}.
Let $P_1$ and $P_2$ be pastures, and suppose $P_2$ is slim (cf. \Cref{def:pastureConditions}).
For $i \in \{1, 2, 3, 4\}$, set $p_i$ to be the number of type $i$ pairs in the full rank sublattice of $P_1$ as found by \Cref{alg:fullRankSublattice}, so that $r_1 = \rk(P_1^\times) = p_1 + p_2 + 2p_3$.
Then the number of candidates tested in \Cref{alg:depthFirstSearch} is approximately $|\FE(P_2)|^{p_3}$.
Note that $p_3$ equals the number of rule pairs which are $\{ 0, 0 \}$ -- for each such rule pair, there are $|\FE(P_2)|$ choices of candidate image in $P_2$, or equivalently nodes in the corresponding level of the search tree.
The slimness condition on $P_2$ implies that for every other rule pair, there is a (essentially) unique choice of candidate image for the associated fundamental element in $P_1$.
Moreover, the presence of type 4 pairs means that the actual number of candidates tested may be even smaller.

The previous paragraph justifies why we seek to minimize the number of type 3 pairs in \Cref{alg:fullRankSublattice}.
Although it is sometimes possible to create a full rank sublattice by only choosing type 3 pairs at each step, such a sublattice would lead to testing approximately $|\FE(P_2)|^{r_1/2}$ candidates in \Cref{alg:depthFirstSearch}.
Thus we choose a type 3 pair only when there is no other way to increase the rank of the sublattice.

In practice, the efficiency of \Cref{alg:depthFirstSearch} is greatly aided by the following phenomenon: for the foundations of many interesting matroids, \Cref{alg:fullRankSublattice} returns a value of $p_3$ which is small relative to $r_1/2$, cf. \Cref{table:p1p3}.
For instance, the Pappus matroid $P$ has a foundation with multiplicative group of rank $7$, but \Cref{alg:fullRankSublattice} produces only two type 3 pairs, i.e. $p_3 = 2$ (and $p_1 = 3$).
Thus to find all rescaling classes of representations of $P$ over $\F_8$, \Cref{alg:depthFirstSearch} only needs to check $\approx 6^2$ candidates (cf. \Cref{rem:naiveComp}) -- as it turns out, there are $18$ such representations.

\begin{table}[h]
\centering
\caption{Numbers of type $1, 3$ pairs for various matroids}
\begin{tabular}{|c|c|c|c|}
\hline
Matroid & $p_1$ & $p_3$ & $r_1$ \\ \hline
$U(3,6)$ & 6 & 4 & 14 \\
$U(3,7)$ & 16 & 6 & 28 \\
Vamos & 20 & 0 & 20 \\
Pappus & 3 & 2 & 7 \\
Non-Pappus & 8 & 0 & 8 \\ \hline
\end{tabular}
\label{table:p1p3}
\end{table}

Another case of interest is when there are no representations, i.e. $\Mor(F_M, P_2) = \emptyset$.
One might expect this to be the worst case for the decision problem above, i.e. for \Cref{alg:depthFirstSearch} to be slowest in this situation, needing to check many spurious candidates before terminating.
However, this is not always the case.
For instance, when $P_2$ is a field, it turns out that many well-known non-representable matroids (including Vamos, non-Pappus, non-Desargues, and $F_7 \oplus F_7^-$) have foundations such that $1_{F_M}$ is a fundamental element (notice: this is already a theoretical guarantee for non-existence of representations over a field, since any pasture morphism sends $1_{F_M}$ to $1_{P_2}$, and $1$ is never a fundamental element in a field).
Now in \Cref{alg:fullRankSublattice}, a fundamental pair containing $1_{F_M}$ will always be chosen as a type $1$ pair prior to any type 3 pair, and subsequently if $1_{P_2} \not \in \FE(P_2)$, then \Cref{alg:depthFirstSearch} will terminate without needing to check any candidates, having eliminated all branches at the start.

Practically speaking, our software implementation enables computation of representations of matroids on $< 10$ elements, over finite fields of order $< 100$, within a few ($< 5$) minutes on a typical laptop (on average; most matroids which are ``far from uniform'', i.e. have many nonbases, have ``small'' foundations which finish within seconds).
Of course, advances in computational ability bring forth a number of theoretical questions in response, and we expect that further inquiry will result in fruitful research to come.

\section{Applications} \label{sec:matrixReps}

We now mention some useful applications of our algorithms.
First, we can efficiently determine orientability of many small matroids, by detecting existence of a pasture morphism from the foundation to the sign hyperfield $\mathbb{S}$.
Since $\mathbb{S}$ is a small pasture (note $|\FE(\mathbb{S})| = 2$), \Cref{alg:depthFirstSearch} for maps to $\mathbb{S}$ with \texttt{FindOne => true} is quick (see \Cref{ssec:options,ssec:efficiency}).
This effectively reduces the runtime of determining orientability to that of computing (a full rank sublattice of) the foundation, i.e. \Cref{alg:TutteGroup,alg:innerTutteGroup,alg:additiveRelations,alg:fullRankSublattice}.
For example, in \cite[Proposition 5.8]{Robbins2023} the authors study the matroid $NC_{9,1}$ without addressing the question of its orientability.
With our software, we can instantly ($< 1$ second) verify that $NC_{9,1}$ is not orientable.

Next, we consider criteria for non-representability of a matroid over any field.
Although it is known (cf. \cite{nelson2016almost}) that asymptotically all matroids are non-representable, concrete examples of non-representable matroids are rare (perhaps owing to the difficulty of proving that a specific matroid is non-representable).
In \Cref{ssec:efficiency} we discussed one sufficient criterion for non-representability: namely $1 \in \FE(F_M)$.
However, this condition is not necessary: there are exactly 4 matroids on $\le 9$ elements (called $R_9^A, R_9^B$, and their duals) which do not have $1$ as a fundamental element, but are not representable over any field (see \cite{sage}).
We may generalize and explain these examples as follows: consider the pasture 
% \vspace{-0.25cm}
\[
P_0 := \F_1^\pm \langle x,y,z,w\rangle \sslash \{x+y-1,x+z-1, \frac{y}{z}+w-1\}
\]
% \vspace{-0.25cm}
which has $P_0^\times = \langle \FE(P_0) \rangle \cong \Z/2\Z \oplus \Z^4$, and 3 hexagons of type $\U$, given by $3$ fundamental pairs $(x, y), (x, z), (y/z, w)$.
Since fields are slim, and $1$ is not a fundamental element in any field, one immediately verifies that there are no pasture morphisms $P_0 \to k$ for any field $k$.
Thus to show that a matroid $M$ is non-representable, it suffices to exhibit a single pasture morphism from $P_0$ to $F_M$, i.e. show that $\Mor(P_0, F_M) \ne \emptyset$.
Using our software, this is quick to check for $R_9^A$ and $R_9^B$.
In addition, if $P_2$ is any pasture with $1 \in \FE(P_2)$, i.e. $( a, 1 ) \in \FP(P_2)$ for some $a \in P_2^\times$, then there is a pasture morphism $P_0 \to P_2$ sending $x, w \mapsto a$ and $y, z \mapsto 1$.
(Note that this gives a simple proof of non-representability for all the non-representable matroids mentioned thus far, and demonstrates an advantage of working in the category of pastures.)
We therefore pose the following question:

\begin{question} \label{q:nonrep}
Is there a matroid $M$ such that $M$ is not representable over any field, but $\Mor(P_0, F_M) = \emptyset$?
\end{question}

We expect that most non-representable matroids have $1$ as a fundamental element, so \Cref{q:nonrep} is likely to be ``generically'' false.
Moreover, a smallest matroid satisfying \Cref{q:nonrep}, if it exists, would necessarily have $\ge 10$ elements.

Finally, we discuss how to compute matrix representations of matroids over finite fields.
By \Cref{thm:UP}, a morphism from the foundation of a matroid $M$ to a pasture $P$ corresponds to a rescaling equivalence class of a $P$-representation of $M$.
We now wish to realize each equivalence class by a concrete matrix.
As mentioned in \Cref{ex:rescalingEquivalenceClassAndIsomorphismClass}, for a fixed basis $B_0$ of $M$, there is a unique reduced row echelon form with respect to $B_0$ for the isomorphism class of any $P$-representation of $M$.
Hence it suffices to extend morphisms in $\Mor(F_M, P)$ to morphisms in $\Mor(P_M, P)$ (where $P_M$ is the universal pasture, see \Cref{thm:UP}).
Given $f\in \Mor(F_M,P)$, we can obtain an extension $\hat{f}$ by finding a splitting $p$ of the exact sequence:

\[
\begin{tikzcd} [column sep=4em, row sep=4em]
0 \arrow{r} & F_M^\times \arrow{r}{i} \arrow{d}{f} & P_M^\times \cong \T_M \arrow[]{r}{\deg}\arrow[dashrightarrow, bend left=22]{l}{p} \arrow[dashrightarrow]{ld}{\hat{f}} & \Z^n \\%
& P^\times
\end{tikzcd}
\]
Moreover, by \cite[Theorem 7.25]{BAKER2021107883}, the rescaling equivalence class of the Grassmann-Pl\"{u}cker function is independent of the choice of the extension, i.e. different splittings may give different Grassmann-Pl\"{u}cker functions, but these Grassmann-Pl\"{u}cker functions must be in the same rescaling equivalence class.
Thus we can use the projection from \Cref{alg:innerTutteGroup} to obtain a Grassmann-Pl\"{u}cker function $\varphi := f\circ \rho^{(0)}$.
\[ \begin{tikzcd} [column sep=4em, row sep=4em]
   G \arrow{r}{\rho} \arrow[bend right=11, swap]{rr}{\rho^{(0)}} \arrow[dashrightarrow]{rrd}{\varphi} & P_M^\times \cong \mathbb T_M \arrow[]{r}{} & F_M^\times \cong \T^{(0)}_M \arrow{d}{f} \\%
& &  P^\times
\end{tikzcd}
\label{eq:splitting} \tag{*}
\]
We illustrate the full procedure of computing representations of a matroid over a finite field via an example.

\begin{ex} \label{ex:mainEx}
\begin{figure}[h]
    \centering
    \begin{tikzpicture}
\draw[thick] (-3,0) -- (3,0);
\draw[thick] (-3,0) -- (1.5,2);
\draw[thick] (-3,0) -- (1.5,-2);
\draw[thick] (-0.75,-1) -- (1.5,2);
\draw[thick] (-0.75,1) -- (1.5,-2);
\filldraw (-3,0) circle (3pt)
node[above]{0};
\filldraw (0,0) circle (3pt)
node[above]{5};
\filldraw (3,0) circle (3pt)
node[above]{6};
\filldraw (1.5,2) circle (3pt)
node[below]{4};
\filldraw (1.5,-2) circle (3pt)
node[above]{2};
\filldraw (-0.75,-1) circle (3pt)
node[below]{1};
\filldraw (-0.75,1) circle (3pt)
node[above]{3};
\end{tikzpicture}
    \caption{Matroid $M$}
    \label{fig:ex22}
\end{figure}
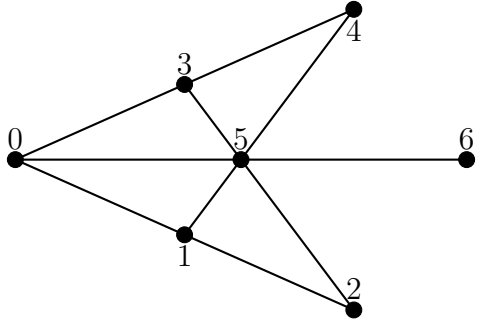

Let $M$ be the matroid shown in \autoref{fig:ex22} (of rank $3$ on $7$ elements, with 5 nonbases depicted as 3-point lines).

Applying \Cref{alg:innerTutteGroup,alg:TutteGroup,alg:additiveRelations} 
with the fixed basis $B_0 := \{0,1,3\}$ of $M$
yields the presentation $F_M^\times = \Z/2\Z \oplus \Z^3 = \coker(\diag(2,0,0,0))$, with torsion subgroup generated by $\epsilon = \begin{bmatrix} 1 & 0 & 0 & 0 \end{bmatrix}^T$, and 3 hexagons of type $\U$, given by the following 3 fundamental pairs:
\[
( u_1, u_2 ) := \left(
\begin{bmatrix}
0 \\
0 \\
0 \\
1 \\
\end{bmatrix}
,
\begin{bmatrix}
1 \\
0 \\
1 \\
0 \\
\end{bmatrix}
\right),
( v_1, v_2 ) := \left(
\begin{bmatrix}
0 \\
1 \\
-1 \\
0 \\
\end{bmatrix}
,
\begin{bmatrix}
1 \\
0 \\
-1 \\
1 \\
\end{bmatrix}
\right),
( w_1, w_2 ) := \left(
\begin{bmatrix}
0 \\
-1 \\
0 \\
2 \\
\end{bmatrix}
,
\begin{bmatrix}
1 \\
-1 \\
2 \\
0 \\
\end{bmatrix}
\right).
\]
The full rank sublattice chosen by \Cref{alg:fullRankSublattice} is
\[
\widetilde{H} := \langle u_1, u_2, v_1 \rangle.
\]

Consider the problem of finding representations of $M$ over $\F_5$.
We fix a primitive element $2 \in \F_5^\times$, which facilitates conversion between additive and multiplicative notation, via $\Z/4\Z \xrightarrow{\sim} \F_5^\times$, $a \mapsto 2^a$.
Then as a pasture, $\F_5$ has multiplicative group $\Z/4\Z = \coker(\begin{bmatrix} 4 \end{bmatrix})$, along with $\epsilon = 2$ (since $-1 = 4 = 2^2$), and 1 hexagon $( ( 1, 2 ), ( -1, -1 ), ( 2, 1 ) )$ of type $\D$.

Now in $\widetilde{H}$: we have that $( u_1, u_2 )$ is a type 3 pair, and $v_1$ belongs to a type 1 pair wrt $\langle u_1, u_2 \rangle$, since $v_2 = u_1 - u_2$.
Per \Cref{alg:depthFirstSearch}, we run over all fundamental pairs $( 1, 2 ), ( -1, -1 ), ( 2, 1 )$ in $\F_5$, which are all possible images of $( u_1, u_2 )$.
For each case, we determine the possible images of $v_2$ (the partner of $v_1$), which are $-1$ $(= 1 - 2)$, 0 $(= -1 - (-1))$, and 1 $(= 2 - 1)$ respectively.
However, since $v_2 \in \FE(F_M)$, its image must be in $\FE(\F_5)$, and thus cannot be $0$ (as $2^0 = 1 \not \in \FE(\F_5)$).
This eliminates $( -1, -1 )$ as a possible image of $( u_1, u_2 )$.
For the other 2 cases: if $v_2 \mapsto -1$, then $v_1$ must be mapped to a partner of $-1$ in $\F_5$, which must be $-1$; similarly if $v_2 \mapsto 1$, then $v_1 \mapsto 2$.

Next, we assemble the candidates as in \Cref{ssec:assembly} to obtain two pasture morphisms:
\begin{equation} \label{eq:morphisms}
\left\{
\begin{bmatrix}
2&-1&0&1
\end{bmatrix},
\begin{bmatrix}
2&1&-1&2
\end{bmatrix}
\right\}.
\end{equation}
From \Cref{alg:TutteGroup,alg:innerTutteGroup}, we have a map $\rho^{(0)} : G \twoheadrightarrow F_M$ as in (\ref{eq:splitting}), which is a $4 \times 31$ matrix
\setcounter{MaxMatrixCols}{35}
\addtolength{\arraycolsep}{-1.5pt}
\[
\begin{bmatrix}
-1&0&0&0&0&0&0&0&0&0&0&0&0&0&0&1&1&-1&-1&-1&-1&0&0&-1&0&0&0&0&0&0&0\\
0&0&0&0&0&0&0&0&0&0&0&0&0&0&0&0&0&0&0&0&0&0&0&0&0&1&0&0&0&0&0\\
0&0&0&0&0&0&0&0&0&0&0&0&0&0&0&0&0&1&1&1&1&0&0&1&0&0&1&0&0&0&0\\
0&0&0&0&0&0&0&0&0&0&0&0&0&0&0&0&0&0&0&0&0&0&1&0&1&0&0&1&1&1&1
\end{bmatrix}
\]
Composing this with e.g. the first pasture morphism in (\ref{eq:morphisms}) gives the following vector recording the Grassmann-Pl\"ucker coordinate of each basis of $M$:
\[
\begin{bmatrix}
-2&0&0&0&0&0&0&0&0&0&0&0&0&0&0&2&2&-2&-2&-2&-2&0&1&-2&1&-1&0&1&1&1&1
\end{bmatrix}
\]
\addtolength{\arraycolsep}{1.5pt}
Finally, we generate a matrix $A = (a_{i,j})_{0 \le i < 3, 0 \le j < 7}$ from the Grassmann-Pl\"{u}cker coordinates.
First, fix a bijection $g : \{ 0,1,2 \} \xrightarrow{\sim} B_0$.
We take the submatrix of $A$ with columns in $B_0$ to be a $3 \times 3$ identity matrix.
For the remaining entries, we set $a_{i,j} = 0$ if $B_0 \backslash \{g(i)\} \cup \{j\}$ is not a basis; otherwise we set $a_{i,j} = 2^b$, where $b$ is the Grassmann-Pl\"ucker coordinate of $B_0 \backslash \{g(i)\} \cup \{j\}$.
Applying this process to the morphisms in (\ref{eq:morphisms}) gives the two matrices
\begin{equation} \label{eq:reps}
\left\{
\begin{bmatrix}
1&0&1&0&1&1&1\\ 
0&1&1&0&0&1&-1\\
0&0&0&1&1&1&-1
\end{bmatrix},
\begin{bmatrix}
1&0&1&0&1&1&1\\
0&1&1&0&0&1&2\\
0&0&0&1&1&1&2
\end{bmatrix}
\right\}.
\end{equation}
These are exactly the 2 (distinct up to rescaling) representations of $M$ over $\F_5$.
\end{ex}

Below is a Macaulay2 session reproducing \Cref{ex:mainEx}:
\begin{verbatim}
Macaulay2, version 1.22

i1 : load "Matroids/foundations.m2"

i2 : M = matroid({{0,1,2},{0,3,4},{0,5,6},{1,4,5},{2,3,5}}, EntryMode => "non
bases")

o2 = a "matroid" of rank 3 on 7 elements

i3 : elapsedTime foundation M
 -- 0.132819 seconds elapsed

o3 = a "foundation" on 4 generators with 3 hexagons

i4 : k = GF 5; peek pasture k

o5 = Pasture{cache => CacheTable{}                                           }
             epsilon => | 2 |
             hexagons => {{{| 1 |, | 2 |}, {| -1 |, | -1 |}, {| 2 |, | 1 |}}}
             multiplicativeGroup => cokernel | 4 |

i6 : elapsedTime morphisms(foundation M, pasture k)
 -- 0.0759619 seconds elapsed

o6 = {| 2 -1 0 1 |, | 2 1 -1 2 |}

i7 : representations(M, k)

o7 = {| 1 0 1 0 1 1 1  |, | 1 0 1 0 1 1 1 |}
      | 0 1 1 0 0 1 -1 |  | 0 1 1 0 0 1 2 |
      | 0 0 0 1 1 1 -1 |  | 0 0 0 1 1 1 2 |

i8 : all(o7, A -> matroid A == M)

o8 = true

\end{verbatim}

% \section{Applications and Further Directions} \label{sec:applications}

% The software implementing the algorithms in this paper can be (and have been) used to verify conjectures on representations of matroids up to 8 elements.
% Due to the generality of the category of pastures, we can deal with partial fields and hyperfields simultaneously.
% In particular, we can check whether a matroid is (weak)-orientable or not.
% For instance, we can verify the following statements using our software:
% \begin{enumerate}
%     \item Given a matroid $M$, we can determine whether it is orientable, and moreover compute the number of reorientation classes.
%     \item Let $M$ be a quaternary orientable matroid on a ground set of at most 8 elements.
% Then $M$ is either representable over the golden-mean partial field or isomorphic to $V_8^+$, the matroid whose unique relaxation is the Vamos matroid.
%     \item 
%     Given two pasture $P_1,P_2$, we can study the morphisms between them.
% Moreover, we are able to compute the composition of two maps.
% Let $V$ be the foundation of the uniform matroid $U(2, 5)$.
% Then we can verify that $\text{End}(V) = \Aut(V) = \Aut(U(2,5)) \cong S_5$ acts transitively on $\Mor(V, \S).$
%     %\tz{these are not verifying conjectures but can be something useful}
% \end{enumerate}

% \subsection{Further Applications}

\textbf{Acknowledgments.} We are first and foremost indebted to Matt Baker and Oliver Lorscheid, for introducing us to this topic and continual support throughout the project -- this work would not be possible without them.
We would also like to thank Rudi Pendavingh for generously sharing unpublished Sage code for computing Tutte groups, which directly inspired \Cref{alg:TutteGroup}.

\begin{small}
\bibliographystyle{plain}
\bibliography{mybib}
\end{small}

\end{document}